\documentclass[11pt,a4paper]{article}
\usepackage{amsmath,amssymb,amsthm}
\usepackage[utf8]{inputenc}
\usepackage[T1]{fontenc}
\usepackage[colorlinks=true,linkcolor=blue,citecolor=blue,urlcolor=blue]{hyperref}
\usepackage{geometry}
\usepackage{graphicx}
\newtheorem{theorem}{Theorem}[section]
\newtheorem{lemma}[theorem]{Lemma}

\newtheorem{claim}[theorem]{Claim}

\newtheorem{corollary}[theorem]{Corollary}

\theoremstyle{definition}

\title{Avoidability of Digraphs with Height Functions and Orientations of $C_4$
}
\author{%
  Hui Lei\thanks{School of Statistics and Data Science, LPMC and KLMDASR,
    Nankai University. {\tt hlei@nankai.edu.cn}.
   Funded by the National Natural Science Foundation of China
    (Nos.\,12371351, 12431013),
    the Natural Science Foundation of Tianjin (24JCYBJC01670),
    and the Fundamental Research Funds for the Central Universities,
    Nankai University.}
  \hspace{2mm}
  Xiaoyi Wang\thanks{School of Mathematical Sciences and LPMC,
    Nankai University. {\tt 2210725@mail.nankai.edu.cn}.}
    \hspace{2mm}
  Zhijun Xu\thanks{School of Mathematical Sciences and LPMC,
    Nankai University. {\tt zhj\_xu@mail.nankai.edu.cn}.} 
    \hspace{2mm}
  Zhenyu Yang\thanks{School of Statistics and Data Science, LPMC and KLMDASR,
    Nankai University. {\tt yangzhenyu@mail.nankai.edu.cn}.}
    }
\date{}

\begin{document}
\maketitle
\begin{abstract}
A  digraph $F$ is avoidable if, for every positive integer $k$, there exists an integer $d$ such that every digraph of minimum out-degree at least $d$ contains an $F$-free subdigraph of minimum out-degree at least $k$. We prove that no digraph admitting a height function
is avoidable, where a height function increases by one along every arc.
This answers a question of Christoph, Janzer, Petrova, and
Steiner and, together with an additional avoidance argument, determines
which orientations of $C_4$ are avoidable. 

Motivated by a further question of Christoph, Janzer, Petrova, and Steiner, we also study Eulerian avoidability, in which the host digraph is required to have equal in-degree and out-degree at every vertex. We show that no one-directed complete
bipartite digraph with nonempty parts is Eulerian-avoidable. In contrast,
the orientation of $C_4$ consisting of two directed paths of
length two with common endpoints is Eulerian-avoidable. Consequently, the
one-directed complete bipartite orientation is the only orientation of
$C_4$ that is not Eulerian-avoidable,  completing the classification of 
$C_4$-orientations.
\end{abstract}

\noindent\textbf{Keywords:} Digraph; avoidability; minimum out-degree;
height function; Eulerian digraph.

\section{Introduction}

A fundamental conjecture in extremal graph theory, posed by Thomassen
in 1983~\cite{T}, asserts that for every $k,g\in\mathbb N$, there exists
$d=d(k,g)\in\mathbb N$ such that every graph of average degree at least
$d$ contains a subgraph of average degree at least $k$ and girth at
least $g$. The conjecture remains open in general, although Kühn and
Osthus~\cite{KO} proved it for $g=6$.
The conjecture admits an equivalent formulation in terms of avoidable
graphs. A graph $F$ is \emph{avoidable} if, for every $k\in\mathbb N$,
there exists $d_F(k)\in\mathbb N$ such that every graph of minimum degree
at least $d_F(k)$ contains an $F$-free subgraph of minimum degree at
least $k$. In this terminology, Thomassen's conjecture asserts that
every graph containing a cycle is avoidable.

For digraphs, a simple reduction to bipartite subdigraphs is not possible: Thomassen~\cite{T2} showed that there exist digraphs with arbitrarily large minimum out-degree in which every directed cycle has odd length.
Christoph, Janzer, Petrova, and Steiner~\cite{CJPS} initiated the study of the natural analog of Thomassen’s conjecture for digraphs.
A digraph $F$ is \emph{avoidable} if, for every $k\in\mathbb N$, there exists $r_F(k)\in\mathbb N$ such that every  digraph  with
minimum out-degree at least $r_F(k)$ contains an $F$-free subdigraph with minimum out-degree at least
$k$. Dellamonica, Koubek, Martin, and R\"odl~\cite{DKMR}
proved that every directed cycle is avoidable. Christoph, Janzer,
Petrova, and Steiner~\cite{CJPS} proved that all orientations of $C_3$ and $C_5$ are avoidable, whereas no oriented forest is avoidable \cite{CJPS}; this builds on the proof of the KAMAK tree conjecture \cite{C} and the results of Hons et al.
\cite{Hons} on grounded forests, and no one-directed complete bipartite
digraph is avoidable. Here a \emph{one-directed complete bipartite graph} is an orientation of a complete bipartite graph all of whose arcs are directed from one to the other color class. Throughout this paper,
digraphs are finite and loopless, with no parallel arcs; digons are allowed. An \emph{oriented graph} has no such pair. 

A \emph{height function} on a digraph $F$ is a map
$h:V(F)\to\mathbb Z$ such that
$h(v)=h(u)+1$ for every arc $u\to v$ of $F$.
All the non-avoidable digraphs identified in~\cite{CJPS} admit height
functions. This led its authors to ask whether every digraph admitting
a height function is non-avoidable~\cite[Question~6.2]{CJPS}. Our first
result answers this question affirmatively.

\begin{theorem}\label{thm:main}
Every digraph admitting a height function is non-avoidable.
More precisely, for every such digraph $F$, there exists an
integer $k\ge2$ such that, for every integer $r\ge k$, there is an
oriented graph with out-degree exactly $r$ at every vertex in which
every  subdigraph of minimum out-degree at least $k$ contains $F$.
\end{theorem}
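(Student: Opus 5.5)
The plan is to build, for a digraph $F$ with height function $h$, a layered host digraph whose arcs always go "up one level". Normalize $h$ so that its values lie in $\{0,1,\dots,L\}$, and write $n=|V(F)|$. Every arc $u\to v$ of $F$ satisfies $h(v)=h(u)+1$, so $F$ is a subdigraph of a "blow-up of a directed path" in which consecutive levels are joined by complete one-directed bipartite graphs. It therefore suffices to construct hosts $D$ with the following property: every subdigraph $D'$ of $D$ with minimum out-degree $\ge k$ contains a structure $A_0\to A_1\to\dots\to A_L$ of disjoint $n$-sets, with all arcs from $A_i$ to $A_{i+1}$ present. One caveat is that $F$ need not be weakly connected. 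In that case I embed each component separately, using disjoint copies of the structure, or simply take $n$ large enough to absorb all components into one structure.

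The host I would use is an iterated, "cyclic-level" construction. Take levels indexed by $\mathbb Z_m$ with $m$ a multiple of large size, and let each vertex at level $i$ send its $r$ out-arcs only to level $i+1$. In a subdigraph $D'$ of minimum out-degree $\ge k$, every vertex then has $k$ out-neighbours in the next level, so one can walk upward indefinitely. This alone does not force complete bipartite pieces. To force them, I would take the level-$(i+1)$ vertex set to be the set of $k'$-subsets (or $r$-subsets) of the level-$i$-relevant set, in the style of the shift-graph / KAMAK-type constructions from~\cite{CJPS}. The known result that one-directed complete bipartite digraphs are non-avoidable gives exactly a two-level gadget: a host $H$ in which every min-out-degree-$k$ subdigraph contains $K_{n,n}$ directed from one side to the other. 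I would compose such gadgets level by level. At each level, vertices of the gadget's "out" side are identified with the "in" side of the next gadget, using a product/tensor construction. The out-degree must be made exactly $r$, which is achieved by padding with fresh arcs into a sink-free regular tail.

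The key step, and the main obstacle, is the inductive propagation. Given $D'$ of min out-degree $\ge k$, I must find a complete bipartite piece $A_0\to A_1$ and then continue from $A_1$ to find $A_1\to A_2$ with the same $A_1$. The difficulty is that the gadget at the next level only guarantees some $K_{n,n}$, not one starting from prescribed vertices. To handle this I would strengthen the induction hypothesis to a "rooted" statement: for every $\ell\le L$, every min-out-degree-$k$ subdigraph contains a layered complete structure of length $\ell$ whose bottom layer can be chosen inside any set $S$ that is closed under the relevant out-neighbourhoods. I would build the construction top-down, so that level $L$ is the deepest gadget and each lower level consists of vertices encoding large subsets of the level above. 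A vertex's out-neighbourhood is then its encoded subset. In $D'$, each surviving vertex retains a $k$-subset of its code, and a pigeonhole/Ramsey argument over the $k$-subsets forces $n$ vertices sharing a common surviving $n$-set, which is exactly the complete bipartite layer. Choosing $k$ large relative to $n$ and $L$, and the code sizes as an iterated tower in $r$, should make each step go through, with digons avoided because all arcs go strictly up one level.
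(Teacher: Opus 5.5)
Your reduction to a complete layered digraph (the paper's $J_{k,t}$) is correct. The encoding construction in your last paragraph is essentially the paper's auxiliary digraph $D'$: each vertex of level $i$ has an $r$-subset of level $i+1$ as out-neighbourhood, with many vertices per subset. The gap is the step ``a pigeonhole/Ramsey argument over the $k$-subsets forces $n$ vertices sharing a common surviving $n$-set''. The retained $k$-sets form a profile $\varphi$ that the adversarial subdigraph $H$ chooses after the host is fixed, and $H$ also chooses which vertices survive.

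Nothing in your argument forces $H$ to keep many vertices with a common code. Even a homogeneous group at one level is useless unless the vertices of its common retained set $A_{i+1}$ again share a retained set, and so on through all levels. A thin $H$ keeping few vertices per code is not ruled out.

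One can pigeonhole \emph{in advance} to find chains of $r$-sets $S_0\to S_1\to\cdots\to S_t$ on which given profiles are constant. This works only for boundedly many profiles at once: the paper handles at most $r-1$ of them using $q=(r-1)\binom{r}{k}^{r-1}+1$ vertices per code. The number of all profiles grows with the host, so this cannot be done for the unknown $\varphi$ directly. Moreover, you would still need a surviving vertex whose out-neighbourhood is exactly $S_0$ to start the chain. Your ``rooted'' induction hypothesis is too vague to supply either ingredient. Composing the known $\vec K_{n,n}$ gadgets runs into exactly the rootedness problem you identify yourself.

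The missing idea is a gadget that handles all profiles simultaneously and leaves $H$ no escape route. In the paper these are recursive out-trees. $T(U)$ is a single leaf pointing to $S_0(U)$ when $|U|\le r-1$. Otherwise it is a root with $r$ children $T(U\setminus\{\varphi_j\})$ for distinct $\varphi_1,\dots,\varphi_r\in U$.

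Take the actual profile $\varphi$ of $H$. A leaf with $\varphi\in U$ cannot lie in $H$, since it would produce $J_{k,t}$ inside $S_0(U),\dots,S_t(U)$. At an internal vertex of $T(U)$ with $\varphi\in U$, at most one child parameter set omits $\varphi$. So by induction at most one child survives, and since $k\ge2$ the root cannot lie in $H$.

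The construction is closed by sending all arcs from the top layer to the roots of $r$ copies of $T(\Omega)$. This gives out-degree exactly $r$ everywhere and forces a $J_{k,t}$-free $H$ to be empty.

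Your own ways of closing the construction do not work:
\begin{itemize}
\item Padding with arcs into a ``sink-free regular tail'' lets $H$ live entirely inside the tail, unless the tail itself forces $F$, which is circular.
\item A $\mathbb Z_m$-cyclic arrangement is incompatible with each level containing more than $r$ vertices for every $r$-subset of the next level. Level sizes would then strictly increase in one direction around the cycle.
\end{itemize}
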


The proof first treats complete layered digraphs, into which every fixed
digraph with a height function embeds. We construct a layered digraph
with many vertices sharing each possible out-neighborhood. A finite
family of recursive trees then forces every subdigraph of sufficiently
large minimum out-degree to contain the required layers. The trees
account simultaneously for all possible choices of retained outgoing
arcs.

We next determine the avoidable orientations of $C_4$, answering
\cite[Question~6.3]{CJPS}. To distinguish the four isomorphism types,
write their arc sets on the vertex set $\{a,b,c,d\}$ as follows:
\begin{align*}
A(\vec C_4)&=\{a\to b,b\to c,c\to d,d\to a\},\\
A(C_4^{(1,3)})&=\{a\to b,b\to c,c\to d,a\to d\},\\
A(C_4^{(2,2)})&=\{a\to b,a\to d,b\to c,d\to c\},\\
A(\vec K_{2,2})&=\{a\to b,a\to d,c\to b,c\to d\}.
\end{align*}
Both $C_4^{(2,2)}$ and $\vec K_{2,2}$
admit height functions; the other two orientations do not.

\begin{theorem}\label{thm:main1}
An orientation of $C_4$ is avoidable if and only if it admits no height
function. Equivalently, precisely $\vec C_4$ and $C_4^{(1,3)}$ are
avoidable.
\end{theorem}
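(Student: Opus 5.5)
The classification splits into a non-avoidability half and an avoidability half.

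\emph{Non-avoidability.} For $C_4^{(2,2)}$ the map $h(a)=0$, $h(b)=h(d)=1$, $h(c)=2$ is a height function. For $\vec K_{2,2}$ the map $h(a)=h(c)=0$, $h(b)=h(d)=1$ is one. Theorem~\ref{thm:main} therefore shows that neither of these orientations is avoidable.

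Conversely, $\vec C_4$ admits no height function, since summing $h(v)-h(u)=1$ around the directed cycle gives $4=0$. Likewise $C_4^{(1,3)}$ admits none, because the path $a\to b\to c\to d$ forces $h(d)=h(a)+3$ while the arc $a\to d$ forces $h(d)=h(a)+1$. So the "if and only if" reduces to showing that $\vec C_4$ and $C_4^{(1,3)}$ are avoidable.

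\emph{$\vec C_4$ is avoidable.} This is the theorem of Dellamonica, Koubek, Martin, and R\"odl~\cite{DKMR} that every directed cycle is avoidable. It remains to handle $C_4^{(1,3)}$.

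\emph{$C_4^{(1,3)}$ is avoidable.} Let $G$ have minimum out-degree at least $d$, with $d$ large in terms of $k$. First I would pass to a subdigraph in which every vertex has out-degree exactly $d$, and discard digons by keeping one arc of each pair. Some care is needed here. The safe approach is to take a random ordering or random orientation choice so that each vertex keeps about half its out-arcs, and an oriented subgraph with out-degree at least $d/3$ remains.

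The copy of $C_4^{(1,3)}$ we must destroy is an arc $a\to d$ together with a directed path $a\to b\to c\to d$ of length three. My plan is a random layering (colouring) argument in the spirit of \cite{CJPS} for $C_3$ and $C_5$. Assign each vertex $v$ an independent uniform label $\lambda(v)\in\mathbb Z_m$, for a suitable modulus $m$. Keep only those arcs $u\to v$ with $\lambda(v)-\lambda(u)\in S$, where the set $S\subseteq\mathbb Z_m$ is chosen so that no $s\in S$ equals a sum $s_1+s_2+s_3$ with all $s_i\in S$. Any surviving copy of $C_4^{(1,3)}$ would produce exactly such a relation, so the surviving subdigraph is $C_4^{(1,3)}$-free. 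A natural choice is $S=\{1\}$ with $m$ not dividing $2$, for instance $m=3$: arcs then increase the label by $1$, and the constraint $3\not\equiv1\pmod 3$ kills every copy.

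The difficulty is that each vertex keeps only about a $1/m$ fraction of its out-arcs in expectation, and no local lemma gives this for all vertices simultaneously when degrees are unbounded in in-degree. The remedy is to avoid a fully random choice. Instead I would choose labels by an iterative deletion or "peeling" argument: repeatedly delete vertices of current out-degree below $k$, and show that the remaining set is nonempty. This can be established by averaging over the label choice, since the expected number of kept arcs is $|A|/3$. One then needs a lemma that a digraph with many arcs relative to vertices contains a subdigraph of large minimum out-degree. That lemma is false in general for out-degree, because of the forest-type constructions.

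\emph{Main obstacle.} The main obstacle is therefore exactly this step: guaranteeing large minimum out-degree after the sparsification. I would first try the approach of \cite{CJPS} for $C_5$ orientations, which works with a carefully chosen vertex ordering or a "greedy labelling." Concretely, I would process vertices in an order and assign $\lambda(v)$ to maximise the number of out-neighbours $w$ with $\lambda(w)=\lambda(v)+1$. Since out-neighbours of $v$ receive labels in one of three classes, pigeonhole gives at least $d/3$ of them in some class, and $\lambda(v)$ can be chosen to match that class. This requires a labelling order in which each vertex's out-neighbours are labelled before it, which only exists for acyclic digraphs.

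To handle cycles, I would combine the labelling with the known avoidability of short directed cycles~\cite{DKMR}. The idea is to first pass to a subdigraph in which the relevant cycles are controlled, and then apply the pigeonhole labelling along a suitable ordering, with a fallback argument for the vertices where the order fails.
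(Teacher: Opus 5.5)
Your non-avoidability half is correct and matches the paper. The height functions you give for $C_4^{(2,2)}$ and $\vec K_{2,2}$ reduce those cases to Theorem~\ref{thm:main}. Your two obstructions show that $\vec C_4$ and $C_4^{(1,3)}$ admit no height function. The case $\vec C_4$ is the directed-cycle theorem of Dellamonica, Koubek, Martin, and R\"odl.

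\textbf{The gap is the avoidability of $C_4^{(1,3)}$, which the proposal does not prove.} You concede that the peeling lemma is false and that the greedy labelling needs an order that exists only for acyclic digraphs. The closing ``fallback argument'' is never specified.

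Worse, the labelling scheme with $S=\{1\}\subseteq\mathbb Z_m$ cannot be rescued by any choice of labels. Labels increase by $1$ along every retained arc, so every directed cycle of the retained digraph has length divisible by $m$. Now take a complete $k$-ary out-tree of large depth, and let each leaf send arcs to $k$ of its ancestors at distances $j\ge2$ with $m\nmid j+1$. This host is oriented and $k$-outregular.

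Let $x$ be a vertex of minimum depth on a directed cycle. Then $x$ is not a leaf, since a leaf's out-neighbours lie higher. So the cycle leaves $x$ along tree arcs and descends to some leaf $\ell$. The next arc goes to an ancestor of $\ell$. Every ancestor of $\ell$ of depth at least that of $x$ lies on the tree path from $x$ to $\ell$, which is already on the cycle, so this ancestor must be $x$. Hence every directed cycle has length $j+1\not\equiv0\pmod m$. For every labelling the retained digraph is therefore acyclic, and it has no nonempty subdigraph of positive minimum out-degree. This is the same obstruction as Thomassen's odd-cycle example, which the paper cites to rule out bipartite reductions.

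\textbf{What the paper does instead (Appendix~\ref{app:shortcut}).} Its argument never prescribes a direction pattern between classes.

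Lemma~\ref{lem:typed-reduction} removes digons by orienting along Euler tours. It then applies Alon's splitting theorem (Theorem~\ref{thm:alon-splitting}) and keeps each vertex's arcs into a single other class. This gives a 1-typed tripartition $(A,B,C)$, and such a tripartition always exists.

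The decisive observation concerns a copy $u\to w\to v\to x$, $u\to x$ with $u\in T_A$. Here $w,x\in A$ and hence $v\in T_A$. So such copies correspond exactly to arcs of an auxiliary multidigraph $\mathcal H(J)$ on $T_A$.

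The unbounded in-degrees you identified as the obstacle are handled in Lemma~\ref{lem:one-source-class} in three steps.
\begin{enumerate}
\item $\mathcal H(F_0)$ has out-degree at most $dk$, so a degeneracy ordering leaves at most $2dk$ preceding in-arcs at each vertex.
\item Independent sparsification with $p=k^{-15}$, together with the symmetric local lemma, reduces this to at most $2k$ distinct preceding in-neighbours. The local lemma applies because each bad event involves only a vertex and its preceding in-neighbours.
\item A greedy pass along the ordering picks $k$ out-neighbours from an independent set of an auxiliary graph $R$. This destroys auxiliary arcs in both directions.
\end{enumerate}
Repeating this for the three source classes, with parameters $K^{20^2}$, $K^{20}$, and $K$, gives a $K$-outregular $C_4^{(1,3)}$-free subdigraph. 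Some argument of this kind is what your proposal is missing.
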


The non-avoidability assertions follow from Theorem~\ref{thm:main}, and
the directed cycle case follows from~\cite{DKMR}. We prove the
avoidability of $C_4^{(1,3)}$ in Appendix~\ref{app:shortcut} by adapting the argument of~\cite[Lemma~4.5]{CJPS}.

A digraph  is \emph{Eulerian} if
each vertex has equal in-degree and out-degree. A digraph $F$ is \emph{Eulerian-avoidable} if, for every
$k\in\mathbb N$, there exists $d_F(k)\in\mathbb N$ such that
every  Eulerian digraph  with
minimum out-degree at least $d_F(k)$ contains an $F$-free subdigraph with minimum out-degree at least
$k$. Every avoidable digraph is Eulerian-avoidable, but the
converse fails, as our next results show.

For positive integers $s,t$, let $\vec K_{s,t}$ denote the orientation
of $K_{s,t}$ in which every arc goes from the part of size $s$ to the
part of size $t$.

\begin{theorem}\label{thm:main3}
A one-directed complete bipartite graph is not
Eulerian-avoidable. More precisely, if $s,t\ge2$, then for every integer
$r\ge t$ there is an Eulerian oriented graph with
minimum out-degree exactly $r$ such that every  subdigraph of
minimum out-degree at least $t$ contains $\vec K_{s,t}$.
\end{theorem}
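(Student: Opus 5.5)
The plan is to build the host digraph as a \emph{cyclically layered} digraph. The layers are $V_0,\dots,V_{L-1}$ with $L\ge3$, and every arc goes from $V_i$ to $V_{i+1}$ (indices mod $L$). This makes the host automatically an oriented graph. It also reduces the Eulerian condition to a local one: each vertex of $V_{i+1}$ must have as many in-neighbours in $V_i$ as out-neighbours in $V_{i+2}$. The local building block is the one behind the non-Eulerian non-avoidability of $\vec K_{s,t}$ from \cite{CJPS}: many vertices share each possible out-neighbourhood. A vertex of $V_i$ that keeps at least $t$ out-arcs in a subdigraph $H$ keeps some $t$-subset of its out-neighbourhood. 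So if more than $(s-1)\binom{r}{t}$ surviving vertices of $V_i$ have the same $r$-set $S\subseteq V_{i+1}$ as out-neighbourhood, pigeonhole gives $s$ of them keeping the same $t$-set, and hence a copy of $\vec K_{s,t}$.

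The key steps, in order, are as follows.
\begin{enumerate}
\item Between consecutive layers, use blow-up blocks. Each $r$-subset $S$ of a designated "hub" part of $V_{i+1}$ receives $M$ private copies in $V_i$, each with out-neighbourhood exactly $S$, where $M>(s-1)\binom rt$. These copies are the vertices of out-degree exactly $r$.
\item This gives hub vertices in-degree much larger than $r$. Rebalance them by giving each hub vertex equally many out-arcs into the next layer, again organised into blocks of the same type. In this way the in-degree excess is passed forward around the cycle, and the layer sizes and multiplicities are chosen so that the counts close up after $L$ steps. This is the same bookkeeping as in a biregular blow-up of a cycle.
\item Show that every subdigraph $H$ with $\delta^+(H)\ge t$ must contain some copy-class with many surviving members. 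A surviving vertex of any layer forces survivors in the next layer, so $H$ meets every layer. I would then use a recursive, tree-type counting argument in the spirit of the proof of Theorem~\ref{thm:main}. Following arcs of $H$ backwards from a surviving hub vertex, one sees that a low-degree copy class cannot be almost entirely deleted without the survivors' out-degree constraints propagating into a contradiction.
\end{enumerate}

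The main obstacle is step 3 combined with the Eulerian constraint. In the plain construction one can argue that the out-degree-$r$ copies are the only vertices pointing into the hubs. Rebalancing adds high out-degree to the hubs, and this gives $H$ new ways to survive, for instance by living only on hub vertices, which have large out-degree. To block this, I would make the hub-to-hub arcs themselves blow-up blocks: every vertex of large degree $D$ has its out-neighbourhood partitioned into $D/r$ groups that are shared by many other vertices. Then any vertex of $H$, whatever its degree, keeps $t$ arcs inside a single shared structure, and pigeonhole applies uniformly. If this uniform version cannot be made Eulerian directly, the fallback is to take the union of the construction with its reverse on a disjoint layer set, glued by a perfect matching of directions. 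I would then check that the pigeonhole still runs on the forward half.
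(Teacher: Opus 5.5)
Your proposal has a genuine gap at step~3, which is where the whole difficulty lies, and the repair you suggest for it is false. A vertex whose out-neighbourhood is partitioned into $D/r$ shared groups need not ``keep $t$ arcs inside a single shared structure''. $H$ may keep one arc into each of $t$ different groups, and then no pigeonhole over $t$-subsets of an $r$-set applies to that vertex.

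Sharing whole out-neighbourhoods instead cannot work either. A vertex $v$ of maximum out-degree $\Delta>t$ can never lie in a pigeonhole-ready class: if more than $(s-1)\binom{\Delta}{t}\ge\Delta$ vertices had out-neighbourhood $N^+(v)$, then every $w\in N^+(v)$ would have in-degree larger than $\Delta$, hence (Eulerian) out-degree larger than $\Delta$. So in any Eulerian host some vertices escape the pigeonhole. You need a separate mechanism that yields a contradiction even though $H$ is free to use those vertices. The recursive tree argument of Theorem~\ref{thm:main} cannot provide it, because its out-trees (in-degree one, out-degree $r$) are incompatible with the Eulerian condition. Your reverse-gluing fallback is not specified enough to check.

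The idea you are missing, which the paper uses, is large girth rather than blow-ups. Put $p=(s-1)\binom rt$ and $q=p+1$. The paper takes parts $S,A_1,\dots,A_q$ with $|S|=r$, all arcs $S\to A_1$ and $A_q\to S$, and an $r$-regular bipartite graph of girth greater than $2p$ directed from $A_i$ to $A_{i+1}$. This host is Eulerian because the girth graphs are $r$-regular and $|A_1|=|A_q|$.

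Pigeonhole is used only once: survivors in $A_q$ keep $t$-subsets of the $r$-set $S$, so $n_q=|A_q\cap V(H)|\le p$. The high-degree vertices of $S$ are never controlled. Instead, whenever $n_{i+1}\le p$, the arcs of $H$ between $A_i$ and $A_{i+1}$ form a forest. Hence $tn_i\le n_i+n_{i+1}-1$, and so $n_i\le n_{i+1}-1$ since $t\ge2$. Reading back from $n_q\le p$ through $p$ layers gives $n_1\le 0$, although $H$ meets every part. Blow-up blocks $K_{M,r}$ are full of $4$-cycles and admit no such shrinking argument.

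Finally, the first sentence of the statement also covers $s=1$ or $t=1$, which your proposal does not address. These cases need a separate easy observation: every digraph of minimum out-degree at least $t$ contains $\vec K_{1,t}$, and every digraph of minimum out-degree at least $s$ contains $\vec K_{s,1}$ (by averaging in-degrees). Combine this with the existence of Eulerian digraphs of large minimum out-degree.
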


\begin{theorem}\label{thm:main2}
For every $k\in\mathbb N$, every  Eulerian digraph of minimum
out-degree at least $18k^4$ contains a spanning $C_4^{(2,2)}$-free subdigraph
with out-degree exactly $k$ at every vertex. In particular, $C_4^{(2,2)}$ is
Eulerian-avoidable.
\end{theorem}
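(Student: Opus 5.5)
The plan is a two-round random selection followed by a local clean-up. The point of the clean-up is that the final subdigraph $H$ must be spanning with out-degree \emph{exactly} $k$ at every vertex. So we cannot simply delete arcs lying in copies of $C_4^{(2,2)}$. Instead, every vertex will keep a reserve of candidate out-arcs from which its final $k$ arcs are chosen.

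First we reformulate the target. $H$ is $C_4^{(2,2)}$-free if and only if, for every ordered pair $(a,c)$ of distinct vertices, $H$ contains at most one directed path $a\to b\to c$ with $b\notin\{a,c\}$. Write $D$ for the host digraph and $d^+(v)=d^-(v)$ for the common degree of $v$, so $d^+(v)\ge 18k^4$. In the first round, each vertex $v$ independently selects a uniformly random set $S_v$ of $m=3k^2$ of its out-neighbours; one could take $m$ somewhat larger if needed.

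A candidate 2-path is a pair $a\to b\in S_a$, $b\to c\in S_b$. A conflict is a pair of candidate 2-paths with the same ends $(a,c)$ and different middles $b\ne b'$. The main counting step bounds, for a fixed arc $a\to b\in S_a$, the expected number of conflicts involving it. There are two kinds.

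\emph{Conflicts in which $a\to b$ is the first arc.} These number at most $\sum_{c\in S_b}\bigl|\{b'\in S_a: c\in S_{b'}\}\bigr|$. Given $b'\in S_a$, the probability that $c\in S_{b'}$ is $m/d^+(b')$. Summing over at most $m$ choices of $c$ and $m$ choices of $b'$ gives an expectation of order $m^3/\min_v d^+(v)\le m^3/(18k^4)$.

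\emph{Conflicts in which $a\to b$ is the second arc.} Here some $u$ has $a,b'\in S_u$ and $b'\to b\in S_{b'}$. The in-degree of $a$ may be enormous, so we must not count in-neighbours naively. This is exactly where the Eulerian hypothesis enters. For each $u\in N^-(a)$, the probability that $a\in S_u$ is $m/d^+(u)$. Conditional on this, the expected number of valid $b'$ is at most $m\cdot m/d^+(\cdot)$. Summing over $u\in N^-(a)$ therefore gives at most $\sum_{u\in N^-(a)} m/d^+(u)\cdot m^2/(18k^4)$.

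To handle the first factor we use a weighted double count. The arc $u\to a$ has weight $m/d^+(u)$, so every vertex sends total weight exactly $m$. Because in-degrees equal out-degrees, the total weight received over all vertices is also $\sum_v m$. This lets us carry out the analysis on average, and then restrict attention to ``good'' vertices by a second-moment or Markov argument.

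The second round runs in reverse degree order. Each vertex $a$ chooses its final $k$ out-arcs from $S_a$ greedily, avoiding any arc that would close a conflict with arcs already fixed. Since $m=3k^2$, each forbidden configuration blocks at most a bounded number of candidates, so a vertex can complete its $k$ choices as long as it has at most $m-k$ blocked candidates.

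I expect the main obstacle to be the second kind of conflict. Making the greedy phase work requires a worst-case bound on blocked candidates, not just an expected one. Vertices of huge in-degree can sit as the middle $b$ or the end $c$ of very many 2-paths. The remedy I would try first is to order the greedy phase by decreasing $d^+$. A vertex $a$ then only has to avoid conflicts created by earlier, higher-degree vertices, and for those vertices the relevant probabilities $m/d^+(u)$ are smallest. The constant $18k^4=2\cdot(3k^2)^2$ should emerge from requiring the number of blocked candidates in $S_a$ to be at most $m-k$.
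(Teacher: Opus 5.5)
There is a genuine gap. Your proposal is a plan whose decisive step is missing, and the place where you say the Eulerian hypothesis enters does not actually use it.

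\textbf{The per-arc normalization and the Eulerian step.} Conditioning on a fixed candidate arc $a\to b\in S_a$ throws away the factor $m/d^+(a)$ that the Eulerian condition has to cancel. Your own second-kind bound $\sum_{u\in N^-(a)}\frac{m}{d^+(u)}\cdot\frac{m^2}{\delta}$ is of order $m^3d^+(a)/\delta^2$ when the in-neighbors of $a$ have out-degree near $\delta$. This growth is genuine: give $a$ many in-neighbors of out-degree about $\delta$ whose other out-neighbors are in-neighbors of $b$ of out-degree about $\delta$; this completes to an Eulerian digraph.

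Your ``weighted double count'' (total weight sent equals total weight received) holds in every digraph, so it is not where Eulerianity is used. It also only controls a global average. A Markov or second-moment argument then leaves bad vertices, and you cannot discard them, because $H$ must be spanning with out-degree exactly $k$ at every vertex.

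What is true is a per-vertex statement. Charge each copy to the random choice at each vertex whose choice it uses, and keep that vertex's own selection factor. In a one-round scheme, for $v$ in the intermediate role this factor is $k/n_v$ with $n_v=d^+(v)=d^-(v)$. Then $\sum_{a\in N^-(v)}1/n_a\le d^-(v)/\delta=n_v/\delta$ cancels it, giving load at most $k^4/\delta$. The source role gives at most $k^4/(2\delta)$ via $q(b,d)\le\min\{n_b,n_d\}$.

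\textbf{The greedy round.} It likewise has no mechanism that gives a worst-case bound at every vertex.
\begin{itemize}
\item With $m=3k^2$ and $\delta=18k^4$, your first-kind estimate is already $m^3/\delta=3k^2/2$ per candidate arc. So candidate-level conflicts are not rare, and the reservoir gives no slack by itself.
\item In the source role a configuration forbids a pair of candidates, not a single one. What you need there is an independent $k$-set in a conflict graph on $S_a$.
\item The sequential choices create long-range dependencies. The unbounded number of vertices rules out a union bound, and the unbounded degrees rule out the symmetric local lemma.
\end{itemize}
The claimed origin of $18k^4=2m^2$ is therefore unsupported.

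\textbf{How the paper avoids this.} It needs neither a reservoir nor a clean-up. Each vertex keeps a uniformly random $k$-subset of its out-arcs, so exact $k$-outregularity is automatic. There is one bad event per copy of $C_4^{(2,2)}$, determined by the choices at the source and the two intermediates, with $p_E\le k^4/(n_a^2n_bn_d)$. The per-vertex loads above give $\sum_{E'\in\Gamma(E)}p_{E'}\le 3\cdot\frac{3k^4}{2\delta}\le\frac14$. The asymmetric local lemma with $x_E=2p_E$ then finishes. The asymmetric form is what copes with unbounded dependency degrees, since only the weighted sums over neighborhoods must be small.
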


Theorem~\ref{thm:main3} uses a cyclic sequence of large-girth bipartite
graphs and a final complete bipartite interface. The latter bounds the
number of vertices that an avoiding subdigraph can retain in the last
layer, whereas the large-girth interfaces force these numbers to
decrease when the layers are read backwards. The proof of
Theorem~\ref{thm:main2} instead uses independent choices of outgoing
arcs and the asymmetric Lov\'asz local lemma. The Eulerian condition
bounds the total probability of the forbidden copies involving any
one random choice, even when the degrees are unbounded above.

\begin{corollary}\label{cor:main3}
An orientation of $C_4$ is Eulerian-avoidable if and only if it is not
isomorphic to $\vec K_{2,2}$.
\end{corollary}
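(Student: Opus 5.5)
The corollary follows by combining the three preceding theorems with the fact that avoidability implies Eulerian-avoidability. There are exactly four orientations of $C_4$ up to isomorphism, namely $\vec C_4$, $C_4^{(1,3)}$, $C_4^{(2,2)}$ and $\vec K_{2,2}$, as listed before Theorem~\ref{thm:main1}. I will treat them one at a time.

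First, $\vec K_{2,2}$ is not Eulerian-avoidable. Apply Theorem~\ref{thm:main3} with $s=t=2$. For every $r\ge2$ it gives an Eulerian oriented graph of minimum out-degree $r$ in which every subdigraph of minimum out-degree at least $2$ contains $\vec K_{2,2}$. Set $k=2$ in the definition of Eulerian-avoidability. Then for any proposed $d$, taking $r=\max(d,2)$ yields a host that satisfies the hypotheses but has no $\vec K_{2,2}$-free subdigraph of minimum out-degree at least $2$. So no valid $d_{\vec K_{2,2}}(2)$ exists.

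Second, $\vec C_4$ and $C_4^{(1,3)}$ are Eulerian-avoidable. By Theorem~\ref{thm:main1} both are avoidable. The definition of Eulerian-avoidability only restricts the class of host digraphs to Eulerian ones, so the same function $r_F(k)$ works and avoidability implies Eulerian-avoidability.

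Third, $C_4^{(2,2)}$ is Eulerian-avoidable directly by Theorem~\ref{thm:main2}, with $d(k)=18k^4$. The subdigraph produced there has out-degree exactly $k$, so in particular its minimum out-degree is at least $k$.

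These cases cover all four isomorphism types, which proves both directions of the equivalence. None of the steps is a real obstacle; the substance lies entirely in Theorems~\ref{thm:main1}, \ref{thm:main3} and~\ref{thm:main2}. The only point needing care is the quantifier order in the negation for $\vec K_{2,2}$: a single $k$ must work against every $d$, and Theorem~\ref{thm:main3} provides this because it holds for all $r\ge t$.
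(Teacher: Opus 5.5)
Your proposal is correct and follows the same route the paper implicitly takes. Theorem~\ref{thm:main3} with $s=t=2$ rules out $\vec K_{2,2}$, Theorem~\ref{thm:main2} handles $C_4^{(2,2)}$, and the avoidability of $\vec C_4$ and $C_4^{(1,3)}$ from Theorem~\ref{thm:main1} transfers to Eulerian-avoidability because Eulerian hosts form a subclass; your reading of the negation (fix $k=2$, take $r=\max\{d,2\}$) is exactly right.
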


This gives an answer to~\cite[Question~6.4]{CJPS}. The complete
classification is recorded in Table~\ref{tab:c4}.
\begin{table}[htbp]
\centering
\begin{tabular}{lccc}
\hline
Orientation & Height function & Avoidable & Eulerian-avoidable\\
\hline
$\vec C_4$ & No & Yes & Yes\\
$C_4^{(1,3)}$ & No & Yes & Yes\\
$C_4^{(2,2)}$ & Yes & No & Yes\\
$\vec K_{2,2}$ & Yes & No & No\\
\hline
\end{tabular}
\caption{Avoidability of the four orientations of $C_4$.}
\label{tab:c4}
\end{table}

Section~\ref{sec:preliminaries} collects notation and auxiliary results.
We prove Theorems~\ref{thm:main}, \ref{thm:main3}, and \ref{thm:main2}
in Sections~\ref{height}, \ref{noteulerian}, and \ref{eulerian},
respectively. Appendix~\ref{app:shortcut} completes the proof of
Theorem~\ref{thm:main1}.

\section{Notation and preliminaries}\label{sec:preliminaries}

We  denote by $\binom{X}{q}$ the
family of all $q$-element subsets of a finite set $X$. For a digraph
$D$, let $V(D)$ and $A(D)$ be its vertex set and arc set. We write
$u\to v$ for the arc $(u,v)$. Let
\(
N_D^+(v)=\{w:v\to w\in A(D)\}\) and \(
N_D^-(v)=\{w:w\to v\in A(D)\}.
\)
The corresponding degrees are $d_D^+(v)=|N_D^+(v)|$ and
$d_D^-(v)=|N_D^-(v)|$. Minimum and maximum out-degrees are denoted by
$\delta^+(D)$ and $\Delta^+(D)$, respectively. We omit the subscript
when the ambient digraph is clear. For $X\subseteq V(D)$, put
$N_D^-(X)=\bigcup_{x\in X}N_D^-(x)$, and let $D[X]$ be the induced
subdigraph on $X$. A set of vertices is \emph{independent} if there is
no arc between any two of its vertices. A \emph{digon} is a pair of
oppositely directed arcs between two vertices.
A digraph is \emph{$r$-outregular} if every vertex has out-degree $r$. A rooted
\emph{out-tree} is a tree whose edges are directed away from its root.
Unless otherwise specified, a path in a digraph is directed. The girth
of a graph is the length of a shortest cycle, with girth
infinity for a forest. Auxiliary multigraphs will be explicitly
identified; their degrees count edge multiplicity.

\subsection{Probabilistic tools}

For a finite family of events $(E_i)_{i\in I}$, a graph on $I$ is a
\emph{dependency graph} if each $E_i$ is independent of the $\sigma$-algebra
generated by all events whose indices are neither $i$ nor adjacent to
$i$. We write $\Gamma(i)$ for the set of neighbors of $i$ in this graph.
When the events are functions of independent random variables, joining
two events whenever their sets of variables intersect gives a dependency
graph.

\begin{lemma}[Asymmetric Lov\'asz local lemma~\cite{EL,S}]
\label{lem:lll-asymmetric}
Let $(E_i)_{i\in I}$ be a finite family of events with a dependency
graph. If there exist numbers $x_i\in[0,1)$ such that
\[
\Pr(E_i)\le x_i\prod_{j\in\Gamma(i)}(1-x_j)
\qquad\text{for every }i\in I,
\]
then $\Pr(\bigcap_{i\in I}\overline{E_i})>0$.
\end{lemma}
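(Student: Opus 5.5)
We follow the classical inductive argument of Erd\H{o}s and Lov\'asz, in the form given by Spencer. The key intermediate claim is the following: for every $S\subseteq I$ and every $i\in I\setminus S$ with $\Pr\bigl(\bigcap_{j\in S}\overline{E_j}\bigr)>0$,
\[
\Pr\Bigl(E_i \Bigm| \bigcap_{j\in S}\overline{E_j}\Bigr)\le x_i .
\]
Given this, the lemma follows from the chain rule. Enumerate $I=\{1,\dots,n\}$. Then
\[
\Pr\Bigl(\bigcap_{i=1}^n\overline{E_i}\Bigr)=\prod_{i=1}^n\Pr\Bigl(\overline{E_i}\Bigm|\bigcap_{j<i}\overline{E_j}\Bigr)\ge\prod_{i=1}^n(1-x_i)>0,
\]
since every $x_i<1$. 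Positivity of each conditioning event is obtained inductively along the same product: each partial product is at least $\prod_{j<i}(1-x_j)>0$.

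The claim is proved by induction on $|S|$. For $S=\emptyset$ it is immediate, since $\Pr(E_i)\le x_i\prod_{j\in\Gamma(i)}(1-x_j)\le x_i$. For general $S$, split $S$ into $S_1=S\cap\Gamma(i)$ and $S_2=S\setminus\Gamma(i)$. If $S_1=\emptyset$, then $E_i$ is independent of the $\sigma$-algebra generated by the events indexed by $S_2$, so the conditional probability equals $\Pr(E_i)\le x_i$. Otherwise, write $B_1=\bigcap_{j\in S_1}\overline{E_j}$ and $B_2=\bigcap_{j\in S_2}\overline{E_j}$, and express the conditional probability as a ratio:
\[
\Pr(E_i\mid B_1\cap B_2)=\frac{\Pr(E_i\cap B_1\mid B_2)}{\Pr(B_1\mid B_2)}.
\]
Here $\Pr(B_2)>0$ because $B_1\cap B_2\subseteq B_2$.

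We bound the two parts of the ratio separately. For the numerator, drop $B_1$ and use independence of $E_i$ from the events indexed by $S_2$:
\[
\Pr(E_i\cap B_1\mid B_2)\le\Pr(E_i\mid B_2)=\Pr(E_i)\le x_i\prod_{j\in\Gamma(i)}(1-x_j).
\]
For the denominator, list $S_1=\{j_1,\dots,j_m\}$ and apply the chain rule once more:
\[
\Pr(B_1\mid B_2)=\prod_{\ell=1}^m\Bigl(1-\Pr\Bigl(E_{j_\ell}\Bigm| B_2\cap\bigcap_{\ell'<\ell}\overline{E_{j_{\ell'}}}\Bigr)\Bigr).
\]
Each conditioning set here is a proper subset of $S$ not containing $j_\ell$, since $i\notin S$. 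Moreover, each conditioning event contains $B_1\cap B_2$ and therefore has positive probability. So the induction hypothesis bounds each conditional probability by $x_{j_\ell}$, and
\[
\Pr(B_1\mid B_2)\ge\prod_{j\in S_1}(1-x_j)\ge\prod_{j\in\Gamma(i)}(1-x_j).
\]
Dividing the two bounds gives the claim.

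The main obstacle is bookkeeping rather than any estimate. We must ensure that the induction is applied only to pairs $(S',j)$ with $j\notin S'$, $|S'|<|S|$ and positive-probability conditioning. We must also use the dependency-graph hypothesis exactly in the stated $\sigma$-algebra form, so that conditioning on $B_2$, an event in that $\sigma$-algebra, does not change $\Pr(E_i)$. Making the induction on $|S|$ simultaneous over all $i\notin S$ handles both points.
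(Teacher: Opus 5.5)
Your proof is correct and is the classical Erd\H{o}s--Lov\'asz/Spencer induction; the paper gives no proof of this lemma and only cites those sources, so your argument matches the intended one. One justification is misattributed: the sets $S_2\cup\{j_1,\dots,j_{\ell-1}\}$ are proper subsets of $S$ not containing $j_\ell$ because $j_\ell,\dots,j_m$ are removed, not because $i\notin S$, though this does not affect validity.
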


We will also use the following standard consequence.
\begin{lemma}[Symmetric Lov\'asz local lemma~\cite{EL,S}]\label{lem:lll-symmetric}
Suppose that $\Pr(E_i)\le p$ for every $i$ and that a dependency graph
has maximum degree at most $\Delta$. If $ep(\Delta+1)\le1$, then with
positive probability none of the events occurs.
\end{lemma}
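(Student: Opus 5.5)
This is the symmetric form of the local lemma, and I would derive it from the asymmetric version, Lemma~\ref{lem:lll-asymmetric}, by choosing all the weights equal. Concretely, take $x_i=1/(\Delta+1)$ for every $i\in I$. If $\Delta=0$, then $x_i=1\notin[0,1)$; in that case every event is mutually independent of all the others. The hypothesis gives $ep\le1$, so $p<1$, and $\Pr\bigl(\bigcap\overline{E_i}\bigr)=\prod(1-\Pr(E_i))>0$ directly. So we may assume $\Delta\ge1$, and then $x_i\in[0,1)$.

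Since $|\Gamma(i)|\le\Delta$ and each factor $1-x_j$ lies in $(0,1]$, we have
\[
x_i\prod_{j\in\Gamma(i)}(1-x_j)\ \ge\ \frac{1}{\Delta+1}\Bigl(1-\frac{1}{\Delta+1}\Bigr)^{\Delta}.
\]
The key estimate is the elementary inequality $\bigl(1-\frac1{\Delta+1}\bigr)^{\Delta}=\bigl(1+\frac1\Delta\bigr)^{-\Delta}\ge e^{-1}$. This holds because $(1+1/\Delta)^\Delta<e$, which is equivalent to $\Delta\log(1+1/\Delta)<1$, and that follows from $\log(1+y)<y$ for $y>0$.

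Combining these, the right-hand side above is at least $\frac{1}{e(\Delta+1)}$. The hypothesis $ep(\Delta+1)\le1$ then gives $\Pr(E_i)\le p\le\frac{1}{e(\Delta+1)}$. Hence the condition of Lemma~\ref{lem:lll-asymmetric} holds for every $i$, and the lemma yields $\Pr\bigl(\bigcap_{i\in I}\overline{E_i}\bigr)>0$.

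The only point needing care is the degenerate case $\Delta=0$ and the requirement $x_i<1$; everything else is routine.
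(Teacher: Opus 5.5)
Your proof is correct, and it is the standard reduction the paper has in mind when it calls this a ``standard consequence'' of Lemma~\ref{lem:lll-asymmetric.} The paper gives no separate proof; your argument takes $x_i=1/(\Delta+1)$, uses $(1+1/\Delta)^{-\Delta}\ge e^{-1}$, and correctly treats the degenerate case $\Delta=0$ separately via mutual independence.
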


\begin{lemma}[Chernoff bounds]\label{lem:chernoff}
Let $X$ be a sum of independent Bernoulli random variables, with mean
$\mu>0$. Then
\[
\Pr\bigl(|X-\mu|>\mu/2\bigr)\le2e^{-\mu/12}.
\]
Moreover, for every real $u\ge\mu$,
\[
\Pr(X\ge u)\le\left(\frac{e\mu}{u}\right)^u.
\]
\end{lemma}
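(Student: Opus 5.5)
The plan is the standard exponential-moment (Bernstein–Chernoff) method applied to $X=\sum_i X_i$, where the $X_i$ are independent Bernoulli variables with means $p_i$ and $\mu=\sum_i p_i$. The single key step is the moment generating function bound. For every real $\lambda$, independence gives
\[
\mathbb E e^{\lambda X}=\prod_i\bigl(1+p_i(e^\lambda-1)\bigr)\le\prod_i\exp\bigl(p_i(e^\lambda-1)\bigr)=\exp\bigl(\mu(e^\lambda-1)\bigr),
\]
using $1+x\le e^x$. Markov's inequality applied to $e^{\lambda X}$ (with $\lambda>0$ for upper tails and $\lambda<0$ for lower tails) then turns this into tail bounds, and it only remains to optimize $\lambda$.

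\emph{Second inequality.} Let $u\ge\mu$. If $u=\mu$ the bound reads $\Pr(X\ge\mu)\le e^{\mu}$, which is trivial since $e^\mu\ge1$. So assume $u>\mu$ and take $\lambda=\ln(u/\mu)>0$. Markov's inequality gives
\[
\Pr(X\ge u)\le e^{-\lambda u}\,\mathbb E e^{\lambda X}\le\exp\bigl(u-\mu-u\ln(u/\mu)\bigr)=e^{-\mu}\left(\frac{e\mu}{u}\right)^u\le\left(\frac{e\mu}{u}\right)^u .
\]

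\emph{First inequality.} I treat the two tails separately.
\begin{itemize}
\item For the upper tail with $\delta=1/2$, choose $\lambda=\ln(1+\delta)$. This gives $\Pr(X\ge(1+\delta)\mu)\le\bigl(e^\delta/(1+\delta)^{1+\delta}\bigr)^\mu$. Checking $\delta-(1+\delta)\ln(1+\delta)\le-\delta^2/(2+\delta)$ reduces to the elementary inequality $\ln(1+\delta)\ge 2\delta/(2+\delta)$, which holds for $\delta\ge0$. For $\delta=1/2$ the exponent is $-\mu/10$.
\item For the lower tail, choose $\lambda=\ln(1-\delta)<0$. This gives $\Pr(X\le(1-\delta)\mu)\le\bigl(e^{-\delta}/(1-\delta)^{1-\delta}\bigr)^\mu\le e^{-\delta^2\mu/2}$, using $(1-\delta)\ln(1-\delta)\ge-\delta+\delta^2/2$ (compare Taylor series). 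For $\delta=1/2$ this is $e^{-\mu/8}$.
\end{itemize}
Since both $e^{-\mu/10}$ and $e^{-\mu/8}$ are at most $e^{-\mu/12}$, a union bound over the two tails gives $\Pr(|X-\mu|>\mu/2)\le2e^{-\mu/12}$. The strict inequality in the event only helps.

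None of these steps is a genuine obstacle: they are standard and are usually just cited. The only place needing care is verifying the two elementary logarithmic inequalities used to simplify the exponents. Each follows by checking equality at $\delta=0$ and comparing derivatives.
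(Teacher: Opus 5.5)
Your proposal is correct and follows the route the paper indicates. The paper only sketches the argument: the bound $\mathbb E e^{\lambda X}\le\exp(\mu(e^\lambda-1))$, Markov's inequality, and optimization in $\lambda$. You carry it out in full, with $\lambda=\ln(u/\mu)$ for the second bound and $\lambda=\ln(1\pm\tfrac12)$ giving tail exponents of at most $-\mu/10$ and $-\mu/8$, both no larger than $-\mu/12$.
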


These estimates follow from
$\mathbb E e^{\lambda X}\le\exp(\mu(e^\lambda-1))$ and Markov's
inequality. Optimizing in $\lambda$ gives the usual multiplicative
upper and lower tail bounds, and the displayed forms follow by taking
relative deviation $1/2$ and upper threshold $u$, respectively. When
$\mu=0$, the corresponding upper-tail probability is zero for $u>0$.

\subsection{Coloring and large girth}

The appendix uses the following consequence of Alon's matrix partition
lemma.
\begin{theorem}[Alon~\cite{A}]\label{thm:alon-splitting}
Every digraph admits a coloring of its vertices with three colors such
that, for each vertex $v$, at least $d^+(v)/3$ of its out-neighbors have
a color different from that of $v$.
\end{theorem}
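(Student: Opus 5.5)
The plan is to obtain the coloring as a local optimum of a weighted $3$-way max-cut potential, with the weights given by a stationary distribution of the random walk that moves along out-arcs. Vertices with $d^+(v)=0$ impose no condition, so we may delete them and argue on the rest. A vertex with $d^+(v)>0$ that loses all its out-neighbors in this deletion has no out-neighbors left in the reduced digraph, so the argument below treats it as a transient vertex. Its constraint is then vacuous, and it can be recolored freely at the end.

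Let $P$ be the transition matrix with $P_{uw}=1/d^+(u)$ for $u\to w$. Consider its closed communicating classes, that is, the terminal strong components of $D$ in which every vertex has positive out-degree. On each such class $C$, Perron--Frobenius gives a stationary vector $\pi$ with $\pi_v>0$ for all $v\in C$ and
\[
\sum_{u\in N^-(v)}\pi_u/d^+(u)=\pi_v .
\]
Because $C$ is closed, every out-neighbor of $v\in C$ lies in $C$, so it suffices to color $C$ on its own. Give each unordered pair $\{u,w\}$ of $C$ the weight
\[
\omega_{uw}=\frac{\pi_u}{d^+(u)}[u\to w]+\frac{\pi_w}{d^+(w)}[w\to u].
\]
Choose a $3$-coloring of $C$ minimizing $\Phi=\sum\omega_{uw}$, where the sum runs over monochromatic pairs. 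At a minimum, moving $v$ to either other color cannot decrease $\Phi$. Averaging over the three colors then shows that the monochromatic weight at $v$ is at most one third of the total weight at $v$.

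The total weight at $v$ is an out-part $\pi_v$ plus an in-part $\sum_{u\in N^-(v)}\pi_u/d^+(u)=\pi_v$, by stationarity, so it equals $2\pi_v$. Hence
\[
\frac{\pi_v}{d^+(v)}\cdot\#\{\text{same-colored out-neighbors}\}\le\frac{2\pi_v}{3}.
\]
Since $\pi_v>0$, this gives at most $2d^+(v)/3$ same-colored out-neighbors, that is, at least $d^+(v)/3$ out-neighbors of a different color.

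The remaining vertices are the transient ones, which lie outside every closed class. Order them by a reverse topological order of the condensation of $D$, so that each transient vertex comes after all vertices reachable from it, except those in its own strong component. Within a transient nonterminal strong component this order does not work directly, so I will handle the whole component $S$ at once. Out-neighbors outside $S$ are already colored and fixed. I rerun the potential argument on $S$, treating arcs from $S$ to fixed vertices as unary costs. This requires a weight vector on $S$ satisfying the stationarity identity above, which I get by using the substochastic restriction of $P$ to $S$. Its Perron eigenvector $\pi$ has $\pi P_S=\lambda\pi$ with $\lambda<1$, and is positive because $S$ is strongly connected. The in-part at $v$ is then $\lambda\pi_v\le\pi_v$, so the bound still comes out to at most $2d^+(v)/3$ same-colored out-neighbors.

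Coloring $S$ does not disturb earlier constraints, because in-neighbors of $S$ lie in $S$ or in components processed later. The main obstacle is exactly this transient-component step: checking that the unary terms and the factor $\lambda<1$ fit the averaging inequality. If that bookkeeping fails, the fallback is to perturb $P$ to $(1-\varepsilon)P+\varepsilon J/n$, which has a positive stationary vector. One then takes an optimal coloring for each $\varepsilon$ and passes to a coloring that recurs as $\varepsilon\to0$, using that there are finitely many colorings and that the inequalities are closed conditions.
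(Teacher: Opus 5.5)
The paper gives no proof of this statement; it only quotes Alon's matrix partition lemma. Your argument therefore has to stand on its own. Its core does, but the first paragraph contains a step that is wrong as written.

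\textbf{The sink deletion.} Deleting sinks is not harmless, because it changes the constraints of the vertices that point to them. A vertex $v$ with $d^+(v)>0$ still needs $d^+(v)/3$ out-neighbors of another color in $D$, and some or all of these may be sinks.
\begin{itemize}
\item If all out-neighbors of $v$ are sinks, its constraint is not vacuous. It concerns the colors of the deleted sinks, which you never assign. Recoloring $v$ ``freely at the end'' can also break the constraints of its in-neighbors.
\item If only some out-neighbors are sinks, a guarantee relative to the reduced out-degree is too weak. With three sink and three non-sink out-neighbors you obtain one differently colored out-neighbor, but two are needed.
\end{itemize}
The repair is to delete nothing. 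A sink is a singleton terminal strong component with no constraint, so color it arbitrarily at the start of the reverse topological order. Every vertex pointing to a sink then sees it as a fixed external color, which is exactly what your unary terms handle. Equivalently, you can add arbitrary out-arcs at the sinks; this changes no constraint of the original digraph.

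\textbf{The component step is fine.} With that repair your argument is a complete, self-contained proof, and the step you flag as the main obstacle goes through. For $v\in S$, let $M_j(v)$ be the total weight of the pairs and unary terms at $v$ whose partner has color $j$. Recoloring $v$ from $c(v)$ to $j$ changes $\Phi$ by exactly $M_j(v)-M_{c(v)}(v)$. Minimality therefore gives
$3M_{c(v)}(v)\le\sum_j M_j(v)=\pi_v+\lambda\pi_v\le2\pi_v$.
Here the out-part $\pi_v$ already includes the arcs leaving $S$. Also $M_{c(v)}(v)\ge(\pi_v/d^+(v))\cdot\#\{\text{same-colored out-neighbors}\}$, so the bound follows.

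Some further simplifications:
\begin{itemize}
\item Only $\lambda\le1$ is used, which holds because the rows of $P_S$ sum to at most one.
\item A singleton transient component has $P_S=(0)$, $\lambda=0$ and $\pi=(1)$, so irreducibility conventions do not matter.
\item The closed classes are just the case $\lambda=1$ with no unary terms. A single pass over all strong components in reverse topological order therefore proves everything.
\end{itemize}
Your perturbation fallback is not needed, though it would also work. It even absorbs the sinks once you give them arbitrary stochastic rows. The one requirement is that you divide each inequality by $\pi^\varepsilon_v>0$ before letting $\varepsilon\to0$, since $\pi^\varepsilon_v$ may tend to zero at transient vertices.
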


For completeness, we give the large-girth construction used in
Section~\ref{noteulerian}.
\begin{lemma}\label{lem:large-girth}
For every integer $r\ge2$ and every positive integer $g$, there is an
 $r$-regular bipartite graph of girth greater than $g$.
\end{lemma}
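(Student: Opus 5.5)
The plan is to take a bipartite graph of large girth and regularize it. The most direct route I would follow is the classical Erdős–Sachs type argument, adapted to the bipartite setting. First, obtain some bipartite graph $H_0$ with girth greater than $g$ and maximum degree at most $r$. Then embed it into an $r$-regular bipartite graph without creating short cycles.

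For the starting point, I would take a long even cycle $C_{2m}$ with $2m>g$. It is bipartite, $2$-regular, and has girth $2m>g$; this already settles $r=2$. For general $r$ I would use an explicit algebraic-free construction by induction on $r$, as follows.

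Suppose $G$ is an $(r-1)$-regular bipartite graph of girth greater than $g$, with parts $X$ and $Y$. Let $N$ be a large integer, and form $N$ disjoint copies $G_1,\dots,G_N$. We then add one perfect matching between copies so as to raise every degree by one, while keeping the graph bipartite and avoiding short cycles.

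Concretely, I would use a base graph on the copy indices. Let $T$ be a bipartite graph with girth greater than $g$ and maximum degree $1$ on $\{1,\dots,N\}$... that only gives degree one. The better device is the standard ``lift'' trick. Take the index set $\mathbb Z_N$ and join vertex $x$ in copy $i$ to vertex $\varphi(x)$ in copy $i+\sigma$, for a fixed bijection $\varphi:X\to Y$ and a shift $\sigma$.

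Instead, the cleanest version is the Cayley/voltage construction. Assign to each copy a group element, and add the new edges with voltages from a free abelian group of large rank. A short cycle in the new graph projects to a closed walk in the quotient. Nonbacktracking closed walks of length at most $g$ either lie in one copy, which is impossible by the girth of $G$, or use new edges whose voltages must sum to zero. Choosing voltages as distinct large powers, say $N^{j}$ for the $j$-th new edge in $\mathbb Z_{M}$ with $M$ huge, rules this out. The reason is that a signed sum of at most $g$ such voltages with not all coefficients cancelling cannot vanish. Each new edge joins $x\in X$ in copy $a$ to $\varphi(x)\in Y$ in copy $a+\text{voltage}$, which preserves bipartiteness, since $X$ is joined only to $Y$, and gives degree $r$.

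The main obstacle is the cancellation case. A closed walk may traverse the same new-edge class forward and backward, giving zero net voltage even though it is not backtracking. To handle this, I would instead make the lift a random $N$-lift and count. The expected number of cycles of length at most $g$ in a random $N$-lift is bounded independently of $N$. I would then delete one edge from each such cycle and repair the degrees by an Erdős–Sachs swap argument: join two deficient vertices on opposite sides at distance greater than $g$, which exists when $N$ is large since balls of radius $g$ have bounded size. If the swap argument becomes messy, I would fall back on citing the existence of $r$-regular bipartite graphs of arbitrary girth, for example via bipartite double covers of Erdős–Sachs graphs. The double cover $G\times K_2$ of an $r$-regular graph of girth greater than $g$ is $r$-regular, bipartite, and has girth at least that of $G$. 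This gives the lemma immediately and is the route I would ultimately write down.
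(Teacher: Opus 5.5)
Your final route is correct, but it differs from the paper's. You take an $r$-regular graph $G$ of girth greater than $g$ from the Erd\H{o}s--Sachs theorem and pass to the bipartite double cover $G\times K_2$. This graph is bipartite and $r$-regular. Its girth is at least that of $G$: a cycle projects to a closed walk of the same length with no immediate backtracking, because vertices two steps apart on the cycle share their $K_2$-coordinate, so equal projections would force them to coincide. Such a walk contains a cycle of $G$ of at most that length. You should include this short verification.

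The paper is instead self-contained. Starting from $K_{r,r}$, it repeatedly forms the $(\mathbb Z/2\mathbb Z)^{E(B)}$-voltage lift and shows that each lift strictly increases the girth. Let $h$ be the girth of $B$. A cycle of length at most $h$ in the lift projects to a non-backtracking closed walk of length at most $h$, which must be a simple $h$-cycle. Its voltage sum, a sum of distinct basis vectors, is therefore nonzero, so the lifted walk cannot close.

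Your argument is shorter, but it outsources essentially all the content to Erd\H{o}s--Sachs, which is just the non-bipartite version of the lemma. The paper explicitly includes its construction for completeness to avoid that dependence.

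The paper's proof is in fact a repair of the voltage idea you abandoned. The cancellation you identified is real: a walk traversing new edges $e,f$, then $e,f$ backwards, stitched together by short paths inside copies, has zero abelian voltage. That cannot happen if one asks only to raise the girth by one per step. Every sufficiently short closed walk is then a simple cycle using each edge once, so abelian voltages with distinct generators cannot cancel.

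In a write-up, drop the fixed-voltage and random-lift-with-repair paragraphs. The first fails for the reason you give, and the second's degree-repair step is never justified.
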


\begin{proof}
We describe a construction that increases the girth of any $r$-regular bipartite graph $B$. Starting from $K_{r,r}$ and
iterating this construction proves the lemma.

Let $Q=(\mathbb Z/2\mathbb Z)^{E(B)}$, with basis vectors
$\varepsilon_e$ indexed by $E(B)$. Form a graph $\widetilde B$ on
$V(B)\times Q$ by replacing each edge $e=uv$, with $u$ in the first
bipartition class, by the edges
\[
(u,z)(v,z+\varepsilon_e)\qquad(z\in Q).
\]
The graph $\widetilde B$ is finite, simple, bipartite, and $r$-regular.
The projection onto $B$ maps the incident edges at each lifted vertex
bijectively onto those at its image.

Let $h$ be the girth of $B$. If $\widetilde B$ had a cycle of length
at most $h$, its projection would be a closed walk of the same length
without immediate backtracking, including at the junction of its last
and first edges. A shortest repeated-vertex segment in such a walk is
a cycle, so the girth condition implies that the projected walk is a
simple cycle $C$ of length $h$. Traversing its edges changes the second
coordinate by
\[
\sum_{e\in E(C)}\varepsilon_e\ne0,
\]
since these edges are distinct. The lifted walk therefore cannot be
closed, a contradiction. Thus the girth strictly increases. Each graph
in the iteration has a cycle because it is finite with minimum degree
at least two, so finitely many iterations yield girth greater than $g$.
\end{proof}

\section{Proof of Theorem \ref{thm:main}}\label{height}

For integers $k\ge2$ and $t\ge1$, let $J_{k,t}$ be the digraph with
pairwise disjoint classes $A_0,\ldots,A_t$, each of size $k$, and all
arcs from $A_i$ to $A_{i+1}$ for $0\le i<t$, with no other arcs. Every
finite digraph admitting a height function is a subdigraph of some
$J_{k,t}$. We  prove the following construction lemma.

\begin{lemma}\label{prop:layered}
For all integers $k\ge2$, $t\ge1$, and $r\ge k$, there is a finite
$r$-outregular oriented graph $D=D(r,k,t)$ such that every nonempty
subdigraph $H\subseteq D$ with $\delta^+(H)\ge k$ contains $J_{k,t}$.
\end{lemma}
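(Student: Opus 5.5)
The plan is to build $D(r,k,t)$ by induction on $t$, adding the layers of $J_{k,t}$ ``from the bottom up'' so that the part already built is closed under out-arcs. Closure under out-arcs is what lets us use the induction hypothesis. If $D'$ is an $r$-outregular oriented graph and we add new vertices whose out-arcs all go into $D'$ or into other new vertices, then $D'$ keeps out-degree $r$ and has no arcs leaving it. So for any $H\subseteq D$ with $\delta^+(H)\ge k$, the restriction $H\cap D'$ again has minimum out-degree at least $k$.

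\emph{Step 1: nonemptiness of $H\cap D'$.} I will make the new part acyclic. Then every vertex of $H$ lying in the new part has a directed path in $H$, through new vertices, that ends in $D'$. Hence $H\cap D'$ is nonempty, and by induction it contains $J_{k,t-1}$.

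\emph{Step 2: the base case.} For $t=0$ the claim is trivial: any nonempty $H$ with $\delta^+(H)\ge k$ has at least $k+1$ vertices. So I may take $D_0$ to be any $r$-outregular tournament-like circulant, for instance the one on $2r+1$ vertices.

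\emph{Step 3: the new layer, and the needed strengthening.} The first layer $A_0$ of $J_{k,t}$ must consist of $k$ vertices of $H$ that all keep the same $k$ out-neighbours, and these out-neighbours must form the first layer of a copy of $J_{k,t-1}$ inside $H\cap D'$. Mere containment of $J_{k,t-1}$ in $H\cap D'$ does not give this, since the new vertices of $H$ need not point at that particular copy. I will therefore do two things.

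First, I strengthen the induction hypothesis so that it is \emph{local}: every vertex $v$ of $H$ in the top part reaches, within a bounded number of steps, a copy of $J_{k,s}$ whose top layer lies in the set of vertices reachable from $v$ by a controlled gadget.

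Second, for every $r$-subset $R$ of the relevant vertex set I add a large number $M$ of copies of a vertex with out-neighbourhood exactly $R$. These copies are then organised into finitely many recursive out-trees. In such a tree, an internal vertex points to $r$ children, and each child is again one of these copies. The leaves point into $D'$.

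\emph{Step 4: the pigeonhole argument.} A vertex of $H$ at the root of such a tree keeps at least $k$ children, each of those keeps at least $k$ children, and so on down the tree. Each leaf retains some $k$-subset of its $r$ out-neighbours, and there are $\binom{r}{k}$ possible such subsets. With tree depth and branching chosen large relative to $\binom rk$ and $k$, a pigeonhole argument iterated down the tree yields $k$ retained vertices sharing a common retained $k$-set $S$. The trees must be designed to account for every possible choice of retained arcs at once, which is what the many copies per out-neighbourhood provide. I then need $S$ to itself be the top layer of a $J_{k,t-1}$ in $H$. For this I apply the same pigeonhole argument recursively at each level, building the layers $A_t, A_{t-1},\dots,A_0$ one after another along nested trees.

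\emph{Main obstacle.} I expect the hardest step to be this synchronisation: forcing the $k$ vertices of a new layer to retain exactly the $k$-set that serves as the previous layer. My first attempt is to formulate the induction statement so that it holds for each possible retained $k$-set separately, that is, ``whenever $H$ retains $S$ out of a copy-block of $R$, then $H$ contains $J_{k,s}$ with top layer $S$.'' I would then choose $M$ by a downward recursion in $s$ so that the pigeonhole succeeds at every level. Finally, I would check that the construction creates no digons, so that $D$ is an oriented graph; this holds because all new arcs point strictly toward earlier-built parts.
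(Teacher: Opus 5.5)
Your proposal has two genuine gaps. The first is that the inductive architecture cannot work. You keep the previously built part $D'$ out-closed and $r$-outregular in the final digraph, with every new arc pointing into earlier parts. Then every subdigraph of $D'$ with minimum out-degree at least $k$ is also an admissible $H$ for $D$. So the conclusion for $D$ at level $t$ implies the same conclusion for $D'$, and by iteration for your base $D_0$; the induction does no work.

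The base already fails. In the circulant on $\mathbb Z_{2r+1}$ with arcs $i\to i+1,\ldots,i+r$, take the subdigraph $H$ with arcs $i\to i+1,\ldots,i+k$. It has $\delta^+(H)=k$. Yet distinct vertices have distinct out-neighborhoods, since a set of $k<2r+1$ consecutive residues determines its first element. Hence $H$ contains no $J_{k,1}$ when $k\ge2$. No gadget placed above an out-closed base can repair this, because $H$ may simply ignore it.

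The paper closes the loop instead. The sink layer $L_t$ sends all its arcs to the roots $R$ of the trees, so no part of $D$ avoiding the gadget is out-closed. The proof then shows directly that a $J_{k,t}$-free $H$ with $\delta^+(H)\ge k$ is empty, with no induction on $t$.

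The second gap is that the synchronization step remains unresolved, and your proposed strengthening cannot hold. For $r>k$, the $k$ vertices of a retained set $S$ choose their out-neighbors in $H$ independently. Nothing local forces them to share a $k$-set, so a statement of the form ``whenever $H$ retains $S$, then $S$ is the top layer of a $J_{k,s}$'' is not provable. Pigeonholing over leaves synchronizes only one level: you may select among many leaves, but not among the vertices of $S$, which $H$ has already fixed.

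The missing idea is to treat $H$'s choices globally.
\begin{itemize}
\item A profile $\varphi$ records a $k$-subset of the out-neighborhood of each vertex of $W=L_0\cup\cdots\cup L_{t-1}$, and there are only finitely many profiles.
\item For every family $U$ of at most $r-1$ profiles, a simultaneous pigeonhole pre-selects a chain $S_0(U)\to\cdots\to S_t(U)$ of $r$-sets on which every profile in $U$ is constant. This uses $q=(r-1)p^{r-1}+1$ vertices for each out-neighborhood, where $p=\binom{r}{k}$.
\item If $\varphi$ is $H$'s own profile and $\varphi\in U$, then any leaf labeled $S_0(U)$ that lies in $H$ yields a copy of $J_{k,t}$.
\item The recursive trees $T(U)$ are then used for elimination rather than pigeonhole. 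Their roots have $r$ children $T(U\setminus\{\varphi_j\})$ for distinct $\varphi_j\in U$. At most one child omits $\varphi$, so by induction at most one child survives in $H$, and $k\ge2$ excludes the root.
\end{itemize}
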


\subsection{Layers and profiles}

Fix $k,t,r$ as in Lemma \ref{prop:layered} and put
\[
p=\binom{r}{k},\qquad q=(r-1)p^{r-1}+1.
\]
Construct a digraph $D'$ with independent layers $L_0,\ldots,L_t$,
starting with $|L_t|=r$. For $i=t-1,t-2,\ldots,0$ and for each
$S\in\binom{L_{i+1}}r$, introduce $q$ distinct vertices in $L_i$ whose
out-neighborhood is exactly $S$. All vertices introduced in this way
are distinct, and there are no other arcs. Since $q\ge r$, every layer
has at least $r$ vertices. Each
vertex of $W=L_0\cup\cdots\cup L_{t-1}$ has out-degree $r$ in $D'$.

A \emph{profile} $\varphi$ assigns to each $v\in W$ a set
\[
\varphi(v)\in\binom{N_{D'}^+(v)}k.
\]
Let $\Omega$ be the finite nonempty set of all profiles.  The
following simultaneous pigeonhole argument is the key step.

\begin{claim}\label{lem:profiles}
For every nonempty $U\subseteq\Omega$ with $|U|\le r-1$, there are
$r$-element sets $S_i(U)\subseteq L_i$, for $0\le i\le t$, such that
for every $0\le i<t$:
\begin{enumerate}
\item $N_{D'}^+(v)=S_{i+1}(U)$ for every $v\in S_i(U)$;
\item each profile $\varphi\in U$ is constant on $S_i(U)$.
\end{enumerate}
\end{claim}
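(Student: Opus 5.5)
We build the sets backwards, from the top layer down. Put $S_t(U)=L_t$, which has exactly $r$ elements.

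Now fix $0\le i<t$ and suppose $S_{i+1}(U)\subseteq L_{i+1}$ has already been chosen with $|S_{i+1}(U)|=r$. By construction of $D'$, the layer $L_i$ contains $q$ distinct vertices whose out-neighborhood is exactly $S_{i+1}(U)$. Call this set $Q_i$. Every $v\in Q_i$ then satisfies condition~1, so it remains to choose $r$ vertices of $Q_i$ on which each $\varphi\in U$ is constant.

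For each $v\in Q_i$, let $\psi(v)=(\varphi(v))_{\varphi\in U}$. Each coordinate $\varphi(v)$ is a $k$-subset of $N^+_{D'}(v)=S_{i+1}(U)$. Since $S_{i+1}(U)$ is the same set for all $v\in Q_i$, every coordinate ranges over the same $p=\binom rk$ possibilities. Hence $\psi$ takes at most $p^{|U|}\le p^{r-1}$ values on $Q_i$.

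We have $|Q_i|=q=(r-1)p^{r-1}+1$. By the pigeonhole principle, some value of $\psi$ is attained by at least $r$ vertices of $Q_i$. Let $S_i(U)$ be any $r$ of them. Then every profile in $U$ is constant on $S_i(U)$, which is condition~2, and condition~1 holds because $S_i(U)\subseteq Q_i$.

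Iterating this step for $i=t-1,\dots,0$ produces all the sets $S_i(U)$. Both conditions for index $i$ involve only $S_i(U)$ and $S_{i+1}(U)$, so they are preserved throughout.

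The only point needing care is that the profile values on $Q_i$ lie in a common set of size $p$, which relies on all vertices of $Q_i$ sharing the out-neighborhood $S_{i+1}(U)$. The numerology of $q$ is chosen exactly to make the pigeonhole step work.
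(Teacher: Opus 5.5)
Your proposal is correct and matches the paper's proof: you build backwards from $S_t(U)=L_t$, take the $q$ vertices of $L_i$ whose out-neighborhood is $S_{i+1}(U)$, and apply pigeonhole to the at most $p^{|U|}\le p^{r-1}$ signatures. Since $q>(r-1)p^{r-1}$, some signature is shared by $r$ vertices, and these form $S_i(U)$.
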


\begin{proof}
Set $S_t(U)=L_t$ and choose the remaining sets in decreasing order of
their indices. Write $U=\{\varphi_1,\ldots,\varphi_\ell\}$, where
$1\le\ell\le r-1$. Once $S_{i+1}(U)$ has been chosen, there are $q$ vertices
in $L_i$ with this out-neighborhood. The signatures
\[
\bigl(\varphi_1(v),\ldots,\varphi_{\ell}(v)\bigr)
\]
of these vertices take at most $p^{\ell}\le p^{r-1}$ values,  because each coordinate is a $k$-element subset
 of the same $r$-element set $S_{i+1}(U)$. Since
$q>(r-1)p^{r-1}$, at least $r$ vertices have the same signature.
Choose any $r$ of them as $S_i(U)$.
\end{proof}

For each eligible $U$, fix a sequence supplied by
Claim \ref{lem:profiles}. The choices for different sets  $U$  need not satisfy any compatibility condition.

\subsection{Recursive trees and the host digraph}

For every nonempty $U\subseteq\Omega$, define a rooted out-tree $T(U)$
whose leaves are labeled by $r$-element subsets of $L_0$.
If $|U|\le r-1$, let $T(U)$ consist of one vertex, labeled $S_0(U)$.
If $|U|\ge r$, choose distinct profiles
$\varphi_1,\ldots,\varphi_r\in U$, take disjoint copies of
\[
T(U\setminus\{\varphi_1\}),\ldots,
T(U\setminus\{\varphi_r\}),
\]
and  add a new root with an arc to the root of each copy. Fix all choices in this
recursion. The parameter sets strictly decrease in size and remain 
nonempty, so every tree is finite. Each internal vertex has exactly $r$
children. Distinct copies have disjoint vertex sets even when their parameter
sets or leaf labels coincide.

Take $D'$ and $r$ disjoint copies of $T(\Omega)$, also disjoint from $D'$,
and let $R$ be the set of their roots. Retain all arcs of these digraphs, add arcs
from each leaf to  every vertex in its label, and add all arcs from
$L_t$ to $R$. Denote the resulting digraph by $D$.

Every vertex of $D$ has out-degree exactly $r$: with arcs from internal tree vertices to children, leaves to labels, $W$ to $D'$-out-neighbors, and $L_t$ to $R$. It is loopless, parallel-arc-free, and digon-free (as $L_0\cap L_t=\emptyset$ for $t\ge1$). Hence $D$ is $r$-outregular and oriented.

\subsection{Forcing the layered digraph}

\begin{proof}[Proof of Lemma~\ref{prop:layered}]
Suppose that a  $J_{k,t}$-free subdigraph $H\subseteq D$ has
$\delta^+(H)\ge k$. For each $v\in W\cap V(H)$, choose a $k$-element
set $\varphi(v)\subseteq N_H^+(v)$; for each $v\in W\setminus V(H)$,
choose an arbitrary $k$-element subset of $N_{D'}^+(v)$. No outgoing arcs
at $W$ were added after constructing $D'$, so these choices define a
profile $\varphi\in\Omega$.

We show by induction on $|U|$ that, whenever $\varphi\in U$, the root
of every occurrence of $T(U)$ in $D$ lies outside $V(H)$. Suppose first
that $|U|\le r-1$. Its root is a leaf $a$ labeled $S_0(U)$. If
$a\in V(H)$, choose a $k$-element set
$A_0\subseteq N_H^+(a)\subseteq S_0(U)$. For $i=0,\ldots,t-1$,
let $A_{i+1}$ be the common value of $\varphi$ on $S_i(U)$, which exists 
by Claim \ref{lem:profiles}.  That claim gives $A_{i+1}\subseteq S_{i+1}(U)$, and
induction on $i$ gives
\[
A_i\subseteq S_i(U)\cap V(H),\qquad
A_{i+1}=\varphi(v)\subseteq N_H^+(v)\quad(v\in A_i).
\]
Hence all arcs from $A_i$ to $A_{i+1}$ belong to $H$. The $k$-element
sets $A_0,\ldots,A_t$ lie in distinct layers and form a copy of
$J_{k,t}$, a contradiction.

Now let $|U|\ge r$.  At least $r-1$ of the $r$ child parameter sets $U\setminus\{\varphi_j\}$ contain
 the fixed profile 
$\varphi$. By induction, their roots lie outside $V(H)$. The root of
$T(U)$ therefore has at most one possible out-neighbor in $H$ and
cannot belong to $H$, since $k\ge2$.

Taking $U=\Omega$ excludes all vertices of $R$. Since vertices of
$L_t$ have all their out-neighbors in $R$, they too are absent from
$H$. Successively, the same holds for $L_{t-1},\ldots,L_0$. Any
remaining vertex of $H$ would lie in one of the finite out-trees.
A remaining vertex of greatest depth would have no out-neighbor in
$H$, again a contradiction.
\end{proof}

\begin{proof}[Proof of Theorem~\ref{thm:main}]
The empty target is contained in every digraph and is trivially
non-avoidable. Let $F$ be nonempty and fix a height function $h$.
After adding a constant to $h$, assume that its minimum is zero. Set
\[
t=\max\{1,\max_{v\in V(F)}h(v)\},\qquad
V_i=\{v\in V(F):h(v)=i\}\quad(0\le i\le t),
\]
and let $k=\max\{2,|V_0|,\ldots,|V_t|\}$. Every arc of $F$ goes
from $V_i$ to $V_{i+1}$ for some $i<t$, so embedding each $V_i$ into
the corresponding class of $J_{k,t}$ embeds $F$ as a subdigraph.
Empty height classes and isolated vertices cause no difficulty because
the copy need not be induced.

For every $r\ge k$, Lemma~\ref{prop:layered} supplies an
$r$-outregular oriented graph $D_r$ in which every nonempty subdigraph
of minimum out-degree at least $k$ contains $J_{k,t}$ and hence $F$.
If $F$ were avoidable, choosing $r\ge\max\{k,r_F(k)\}$ would contradict
the definition of $r_F(k)$.
\end{proof}

\section{Proof of Theorem~\ref{thm:main3}}\label{noteulerian}

First suppose that $s,t\ge2$, and fix an integer $r\ge t$. Put
\[
p=(s-1)\binom{r}{t},\qquad q=p+1.
\]
By Lemma~\ref{lem:large-girth}, choose an $r$-regular
bipartite graph $B$ of girth greater than $2p$. Its bipartition classes
have the same size, say $\ell$, by regularity, and $\ell\ge r$.

Take pairwise disjoint sets $S,A_1,\ldots,A_q$ with $|S|=r$ and
$|A_i|=\ell$. Construct $D_r$ by including all arcs from $S$ to $A_1$,
a copy of $B$ directed from $A_i$ to $A_{i+1}$ for each $1\le i<q$,
and all arcs from $A_q$ to $S$. There are no other arcs. Every vertex
in $S$ has in-degree and out-degree $\ell$, and every other vertex has
both degrees $r$. Since $q\ge2$, this is an Eulerian oriented graph
with minimum out-degree exactly $r$.

Suppose that a $\vec K_{s,t}$-free subdigraph $H\subseteq D_r$
has $\delta^+(H)\ge t$. Write $n_i=|A_i\cap V(H)|$. Following outgoing
arcs in $H$ visits the parts in the cyclic order
$S,A_1,\ldots,A_q,S$, so each part meets $V(H)$ and each $n_i$ is
positive.

For every $x\in A_q\cap V(H)$, select a $t$-element subset of
$N_H^+(x)\subseteq S\cap V(H)$. No such subset can be selected by
$s$ different vertices, since they would form $\vec K_{s,t}$.
Consequently,
\begin{equation}\label{eq:last-layer}
n_q\le(s-1)\binom{|S\cap V(H)|}{t}\le p.
\end{equation}

If $n_{i+1}\le p$, the undirected graph formed by the arcs of $H$
between $A_i\cap V(H)$ and $A_{i+1}\cap V(H)$ is a forest; otherwise, we would obtain a cycle
of length at most $2n_{i+1}\le2p$, contrary to the girth of
$B$. If this forest has $e_i$ edges, then
\[
tn_i\le e_i\le n_i+n_{i+1}-1.
\]
The first inequality uses the minimum out-degree of $H$,  and the second uses the forest
bound. Since $t\ge2$ and $n_i>0$,
\begin{equation}\label{eq:layer-decrease}
n_i\le(t-1)n_i\le n_{i+1}-1\le p-1.
\end{equation}
Starting from~\eqref{eq:last-layer} and applying
\eqref{eq:layer-decrease} backwards gives
\[
1\le n_1\le n_q-(q-1)\le p-p=0,
\]
a contradiction. This proves the asserted construction. As $r$ can
be arbitrarily large while $s,t$ are fixed, no Eulerian-avoidability
threshold can exist for the target minimum out-degree $t$.

It remains to consider a part of size one. Every digraph of minimum
out-degree at least $t$ contains $\vec K_{1,t}$. Also, if a 
digraph $H$ has $\delta^+(H)\ge s$, then
\[
\sum_{v\in V(H)}d_H^-(v)=\sum_{v\in V(H)}d_H^+(v)\ge s|V(H)|,
\]
so some vertex has at least $s$ in-neighbors, giving a copy of
$\vec K_{s,1}$. Eulerian digraphs of arbitrarily large minimum
out-degree exist, for example by replacing every edge of a complete
graph by a digon. These observations settle the remaining cases.
\qed

\section{Proof of Theorem~\ref{thm:main2}}\label{eulerian}

Recall that $C_4^{(2,2)}$ has arcs $a\to b$, $a\to d$, $b\to c$, and $d\to c$.
We retain exactly $k$ outgoing arcs at each vertex independently. The
main estimate bounds the sum of the probabilities of forbidden copies
using any one of these random choices.

For $k=1$, retaining one outgoing arc at each vertex gives a spanning
$C_4^{(2,2)}$-free subdigraph. Assume that $k\ge2$, and let $D$ be a nonempty
Eulerian digraph. Write
\[
n_v=d_D^+(v)=d_D^-(v),\qquad
\delta=\min_{v\in V(D)}n_v\ge18k^4.
\]
Independently for each vertex $v$, choose a uniformly random
$k$-element subset of $N_D^+(v)$ and retain precisely the arcs from
$v$ to this subset. The resulting spanning subdigraph $H$ is
$k$-outregular.

Index the copies of $C_4^{(2,2)}$ in $D$ by their source $a$, unordered pair of
intermediate vertices $\{b,d\}$, and sink $c$, with all four vertices
distinct. For each copy, let $E$ be the event that all its arcs are
retained, and put $p_E=\Pr(E)$. Then
\begin{equation}\label{eq:diamond-probability}
p_E=\frac{\binom{n_a-2}{k-2}}{\binom{n_a}{k}}\frac{k}{n_b}\frac{k}{n_d}=\frac{k(k-1)}{n_a(n_a-1)}\frac{k}{n_b}\frac{k}{n_d}
\le\frac{k^4}{n_a^2n_bn_d}.
\end{equation}
 where the last inequality uses $n_a\ge k$. The event depends only on the random choices at
$a,b,d$, not on the choice at the sink $c$.

For vertices $x,y$, put $q(x,y)=|N_D^+(x)\cap N_D^+(y)|$. Fix a
vertex $v$.  The sum of the probabilities of the events with source $v$ is
at most
\begin{align}
\sum_{E:\,v\text{ is the source}}p_E
&\le\frac{k^4}{n_v^2}
\sum_{\{b,d\}\in\binom{N_D^+(v)}2}\frac{q(b,d)}{n_bn_d}\notag\\
&\le\frac{k^4}{n_v^2}\binom{n_v}{2}\frac1\delta
\le\frac{k^4}{2\delta}.
\label{eq:source-load}
\end{align}
Here we used $q(b,d)\le\min\{n_b,n_d\}$, hence
$q(b,d)/(n_bn_d)\le1/\max\{n_b,n_d\}\le1/\delta$, and allowing
inadmissible sinks only increases the bound.

If $v$ is an intermediate vertex and $w$ is the other intermediate
vertex, then~\eqref{eq:diamond-probability} gives
\begin{align}
\sum_{E:\,v\text{ is intermediate}}p_E
&\le\frac{k^4}{n_v}
\sum_{a\in N_D^-(v)}\frac1{n_a^2}
\sum_{w\in N_D^+(a)\setminus\{v\}}\frac{q(v,w)}{n_w}\notag\\
&\le\frac{k^4}{n_v}\sum_{a\in N_D^-(v)}\frac1{n_a}
\le\frac{k^4d_D^-(v)}{n_v\delta}
=\frac{k^4}{\delta}.
\label{eq:intermediate-load}
\end{align}
The second inequality uses $q(v,w)\le n_w$. The Eulerian hypothesis
is used in the final equality. Thus, by
\eqref{eq:source-load} and~\eqref{eq:intermediate-load},
\begin{equation}\label{eq:vertex-load}
\sum_{E:\,E\text{ uses the choice at }v}p_E
\le\frac{3k^4}{2\delta}
\qquad\text{for every }v\in V(D).
\end{equation}

Join two events if their sets of random-choice vertices intersect.
This gives a dependency graph because the choices at distinct
vertices are independent. Every event uses three vertices. Thus, by~\eqref{eq:vertex-load}, its
neighborhood $\Gamma(E)$ satisfies
\[
\sum_{E'\in\Gamma(E)}p_{E'}
\le3\cdot\frac{3k^4}{2\delta}\le\frac14.
\]
Set $x_E=2p_E$. By~\eqref{eq:diamond-probability},
$p_E\le k^4/\delta^4$, so $0\le x_E<1$. The elementary inequality
$\prod_i(1-y_i)\ge1-\sum_i y_i$ for $y_i\in[0,1]$ yields
\[
\prod_{E'\in\Gamma(E)}(1-x_{E'})
\ge1-\sum_{E'\in\Gamma(E)}x_{E'}\ge\frac12.
\]
Consequently,
\[
p_E\le x_E\prod_{E'\in\Gamma(E)}(1-x_{E'}).
\]
Lemma~\ref{lem:lll-asymmetric} implies that with positive probability
none of the events occurs. The resulting $H$ is a spanning
$k$-outregular $C_4^{(2,2)}$-free subdigraph, as required.
\qed

\appendix
\section{The remaining avoidable orientation of the four-cycle}\label{app:shortcut}

Let $Q=C_4^{(1,3)}$ be the oriented graph on $\{a,b,c,d\}$ with
arcs $a\to b$, $b\to c$, $c\to d$, and $a\to d$.
We adapt the argument of Christoph, Janzer, Petrova, and Steiner
\cite[Lemma~4.5]{CJPS}, supplying the reduction and the probabilistic
estimates needed here.

\begin{theorem}\label{thm:c413-avoidable}
The oriented graph $C_4^{(1,3)}$ is avoidable.
\end{theorem}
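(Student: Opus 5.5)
Given $k$, we want a threshold $r$ such that every digraph $D$ with $\delta^+(D)\ge r$ contains a $Q$-free subdigraph of minimum out-degree at least $k$, where $Q=C_4^{(1,3)}$. We first reduce the host, then sparsify it randomly.

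\textbf{Reduction.} We pass to a spanning subdigraph in which every vertex has out-degree exactly $m$, for a large $m$ to be chosen. Then we apply Theorem~\ref{thm:alon-splitting} and keep only the arcs whose endpoints receive different colours. This gives a $3$-partite digraph with classes $V_1,V_2,V_3$ and every out-degree at least $m/3$. Each class is independent, so no copy of $Q$ has an arc inside a class. A copy of $Q$ consists of a path $a\to b\to c\to d$ together with a shortcut $a\to d$, so $a\neq c$ and $b\neq d$ as vertices. By pigeonhole on which out-neighbourhood classes dominate, we thin the arcs at each vertex $v$ to at least $m/6$ arcs going into a single class $\sigma(v)\neq$ the class of $v$. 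After this, every vertex of class $i$ sends all its arcs to one other class. This restricts the possible colour patterns of a copy of $Q$ and will be used to control dependencies.

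\textbf{Random sparsification.} As in the proof of Theorem~\ref{thm:main2}, each vertex independently picks a uniformly random $K$-subset of its out-neighbourhood, with $K$ somewhat larger than $k$. The aim is not to kill every copy of $Q$. Instead, we delete afterwards a few \emph{shortcut} arcs. For each copy with all four arcs retained, we delete the arc $a\to d$. A vertex $a$ loses arc $a\to d$ only if some retained $2$-path $b\to c\to d$ starts at one of $a$'s chosen out-neighbours $b$ and ends at another chosen out-neighbour $d$. Define the bad event $E_a$: more than $K-k$ of $a$'s chosen out-neighbours $d$ are endpoints of retained $2$-paths $b\to c\to d$ with $b$ also chosen by $a$. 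If no $E_a$ occurs, each vertex keeps at least $k$ arcs.

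\textbf{Probability estimate.} Condition on $a$'s choice $B$ with $|B|=K$. The set reached from $B$ by retained $2$-paths has size at most $K^3$. The concern is whether this set meets $N^+(a)$ in many of the specific $K$ chosen vertices. The choices at $b\in B$ and at the intermediate vertices $c$ are independent of the choice at $a$, so we should condition the other way. Fix everyone else's choices; the set $Z_a$ of endpoints of $2$-paths from $N^+(a)$ is then determined. The remaining issue is that $B$ itself must hit $Z_a$ many times.

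\textbf{Main obstacle.} This last step is where I expect difficulty. $Z_a$ could be a large fraction of $N^+(a)$: if every out-neighbour has out-neighbours whose out-neighbourhoods cover $N^+(a)$, then $Z_a$ is all of $N^+(a)$. A naive local lemma therefore fails. It is also why the original argument of~\cite[Lemma~4.5]{CJPS} uses a multi-round or greedy ordering.

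I would first try a two-stage choice with very different sizes $K_1\ll K_2$. Vertices in colour class order $1,2,3$ choose with sizes growing geometrically across classes, so the middle arc $b\to c$ is sparse relative to the fan at $a$. Then the expected number of chosen $d$ hit through $2$-paths from $B$ is about $K\cdot K^2\cdot(\text{density})$. This becomes small once out-degrees are large compared to $K^3$, using the degree-$\ge m/6$ bound at $a$ with $m\gg K^4$. Chernoff (Lemma~\ref{lem:chernoff}) then controls $E_a$ conditionally. The symmetric local lemma (Lemma~\ref{lem:lll-symmetric}) finishes the argument, since the dependencies are bounded by a function of $K$ once we work with the $m$-outregular host and count in-degrees by averaging. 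Large in-degree vertices remain a risk. If they break the argument, the fallback is to delete them using the standard averaging trick from~\cite{CJPS}.
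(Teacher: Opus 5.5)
There is a genuine gap. Your reduction (Alon colouring, then one target class per vertex) is sound, and so is the idea of deleting the shortcut arc $a\to d$ of every surviving copy. The step meant to finish the proof fails.

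As a function of the independent choices, $E_a$ depends on the choices at $a$, at every vertex of $N^+(a)$, and at every vertex of $N^+(N^+(a))$. So the choice at a vertex $v$ influences $E_a$ for every $a\in N^-(v)\cup N^-(N^-(v))$. Passing to an $m$-outregular host bounds only out-degrees, and averaging controls the average in-degree, not the maximum. Hence the dependency graph has unbounded degree, and Lemma~\ref{lem:lll-symmetric} does not apply. Nothing in your argument bounds $\sum_{E'\in\Gamma(E_a)}\Pr(E')$ for Lemma~\ref{lem:lll-asymmetric} either; that bound is exactly what the Eulerian hypothesis supplied in Theorem~\ref{thm:main2}.

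Large in-degrees also cannot be deleted away. Take $r$ disjoint complete $r$-ary out-trees of depth $h\ge1$, join every leaf to all vertices of an $r$-set $S$, and join $S$ to all roots. This digraph is $r$-outregular. Yet every nonempty subdigraph of minimum out-degree at least $2$ must meet $S$. It therefore contains at least $2^{h+1}$ leaves, and some vertex of $S$ has in-degree at least $2^{h+2}/r$ in it.

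What you call the main obstacle is in fact not one. Given all other choices, each $b$ reaches a set $R_b$ of at most $K^2$ vertices. So a uniform $K$-subset of $N^+(a)$ contains on average at most $K^4/|N^+(a)|$ pairs $(b,d)$ with $d\in R_b$. The difficulty lies entirely in the dependency structure, and your geometric-size variant does not address it.

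The paper handles unbounded in-degrees with an ordering rather than a global local lemma.

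\textbf{Structural step.} Digons are removed first. In a 1-typed tripartition, a copy $u\to w\to v\to x$, $u\to x$ with $u\in T_A$ has $w,x\in A$ and $v\in T_A$, while $w\notin T_A$. So one first cuts every vertex outside $T_A$ down to out-degree $k$, while vertices of $T_A$ keep $d=k^{20}$ arcs. Consider the auxiliary multidigraph $\mathcal H$ on $T_A$, with an arc $u\to v$ labelled $w$ for each path $u\to w\to v$ whose ends have a common out-neighbour. It has out-degree at most $dk$, hence is $2dk$-degenerate. This yields an ordering in which each vertex has at most $2dk$ preceding in-neighbours.

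\textbf{Probabilistic step.} The local lemma is applied only to events concerning a vertex and its predecessors. Arcs at $T_A$ are thinned with probability $k^{-15}$ so that each vertex keeps at least $3k^4$ arcs and has at most $2k$ preceding in-neighbours in $\mathcal H$. These events have bounded dependence, and the unboundedly many later in-neighbours never enter.

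\textbf{Greedy step.} A pass in this order picks $k$ out-neighbours of each $v_i$. They avoid the chosen sets of its predecessors in $\mathcal H$ and form an independent set in a conflict graph of average degree at most $2k^2$. This kills auxiliary arcs in both directions.

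Three rounds with parameters $K^{20^2}$, $K^{20}$, $K$ then handle sources in $T_A$, $T_B$, and $T_C$.
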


A tripartition $(A,B,C)$ of an oriented graph $D$ is \emph{1-typed}
if $A$, $B$, and $C$ are independent sets and every vertex has all its
out-neighbors in a single class. When $\delta^+(D)>0$, write $T_A$, $T_B$, and $T_C$ for the sets of vertices whose out-neighbors lie in
$A$, $B$, and $C$, respectively. Note that $T_A$ is independent. In fact, an arc between two of its vertices would have its head in both $A$ and $T_A$, whereas
$A\cap T_A=\varnothing$.

\begin{lemma}\label{lem:typed-reduction}
For every positive integer $q$, every digraph of minimum out-degree at
least $12q$ contains a spanning oriented subdigraph of minimum out-degree
at least $q$ admitting a 1-typed tripartition.
\end{lemma}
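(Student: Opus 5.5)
Starting from a digraph $D$ with $\delta^+(D)\ge 12q$, the plan is to thin it out in three successive steps: first remove digons, then make the colour classes independent, then force each vertex's out-neighbours into a single class. The numbers have to work out to $12q\to 6q\to 2q\to q$ in turn.

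\textbf{Step 1: removing digons.} First we may discard vertices' surplus arcs so that every out-degree is exactly $12q$; this is harmless since a spanning subdigraph of a spanning subdigraph is spanning. To get rid of digons, I would orient each digon $\{u\to v,v\to u\}$ by keeping just one of its two arcs. A naive choice could cost a vertex many out-arcs. Instead, consider the undirected graph $G$ of digons, and choose an orientation of $G$ in which every vertex keeps at least half of its digon arcs as out-arcs (rounded down). Such an orientation exists by taking an Eulerian orientation of $G$ with a matching added on the odd-degree vertices. Then each vertex keeps all its non-digon out-arcs and at least $\lfloor d_G(v)/2\rfloor$ of its digon out-arcs. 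This leaves an oriented spanning subdigraph $D_1$ with $\delta^+(D_1)\ge 6q$.

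\textbf{Step 2: making the classes independent.} Apply Theorem~\ref{thm:alon-splitting} to $D_1$ to get a $3$-colouring $A,B,C$ in which every vertex $v$ has at least $d^+(v)/3\ge 2q$ out-neighbours of a colour different from its own. Delete all arcs inside colour classes. The result is a spanning oriented subdigraph $D_2$ in which $A$, $B$, $C$ are independent and $\delta^+(D_2)\ge 2q$.

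\textbf{Step 3: one target class per vertex.} Each vertex $v$ now sends its at least $2q$ out-arcs into the two classes other than its own. By pigeonhole, one of these two classes receives at least $q$ of them. Keep only the arcs from $v$ into that class, with ties broken arbitrarily. The resulting spanning subdigraph $D_3$ is still oriented, and $A,B,C$ remain independent since we only deleted arcs. Every vertex has all its out-neighbours in a single class and out-degree at least $q$, so $(A,B,C)$ is a $1$-typed tripartition of $D_3$.

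\textbf{Main obstacle.} The only delicate point is Step 1: ensuring the digon removal loses at most half of each vertex's out-degree. The balanced-orientation argument handles it, at the cost of one unit of rounding per vertex. To absorb that rounding, I would either assume $q\ge1$ and check that $\lfloor\cdot\rfloor$ still yields at least $6q$ (using the exact out-degree $12q$, which is even), or fall back on a random orientation of the digons combined with the local lemma. The first route should suffice: if $v$ has $m$ digon partners and $12q-m$ other out-arcs, it keeps at least $12q-m+\lfloor m/2\rfloor\ge 6q$ out-arcs.
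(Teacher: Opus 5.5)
Your proof is correct and follows essentially the same route as the paper: a balanced orientation of the digon graph (you pair up odd-degree vertices with added edges, where the paper adds an apex vertex; these are equivalent), then Alon's splitting theorem to keep at least $2q$ cross-class out-arcs, then pigeonhole onto a single class. The initial trimming to out-degree exactly $12q$ and the local-lemma fallback are unnecessary, since $s+\lfloor t/2\rfloor\ge\lfloor (s+t)/2\rfloor\ge 6q$ holds directly.
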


\begin{proof}
First eliminate digons. Form an undirected graph whose edges are the
pairs of vertices that form digons. Every undirected graph has an
orientation in which every vertex $v$ has out-degree at least
$\lfloor d(v)/2\rfloor$: add a new vertex adjacent to every odd-degree vertex, orient the edges along an Euler tour in each nontrivial component of the
resulting even graph, and delete the added vertex. Keep the indicated
arc from each digon, together with every arc not belonging to a digon.
A vertex originally incident with $t$ digons and $s$ other outgoing
arcs retains at least
\[
 s+\lfloor t/2\rfloor\ge \lfloor(s+t)/2\rfloor
\]
outgoing arcs. Therefore, the resulting oriented graph has minimum
out-degree at least $6q$.

Apply Theorem~\ref{thm:alon-splitting}: every vertex has at
least one third of its out-neighbors in other color classes. Delete
the arcs within the classes. Each vertex still has at least $2q$
out-neighbors, distributed between the other two classes. For each
vertex, keep its outgoing arcs to a class containing at least $q$ of
these neighbors. This gives the required subdigraph and tripartition.
\end{proof}

\begin{lemma}\label{lem:one-source-class}
Let $k\ge100$ be an integer. Suppose that $D$ is an oriented graph of minimum
out-degree at least $k^{20}$ with a 1-typed tripartition $(A,B,C)$.
Then $D$ contains a spanning subdigraph $D'$ in which every vertex has
out-degree $k$ and no copy of $Q$ has its source in $T_A$.
\end{lemma}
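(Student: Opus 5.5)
The plan is to restrict attention to the structure that a forbidden copy must have, and then choose the out-arcs at $T_A$ in two rounds.

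\textbf{Structure of a bad copy.} Consider a copy of $Q$ with arcs $a\to b$, $b\to c$, $c\to d$, $a\to d$ and source $a\in T_A$. Then $b,d\in A$. Since $d\in A$ and $c\to d$, also $c\in T_A$. Because $A\cap T_A=\varnothing$, we have $b\notin T_A$ and $c\neq a$; the latter holds because $D$ is oriented, so $a\to b\to a$ is impossible. Hence every bad copy consists of two vertices $a,c\in T_A$, one vertex $b\in A\setminus T_A$ and one vertex $d\in A$.

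\textbf{First step: vertices outside $T_A$.} Every vertex outside $T_A$ keeps an arbitrary fixed set of exactly $k$ out-arcs. These choices never create a bad copy by themselves, and afterwards each $b\in A$ has at most $k$ retained out-neighbours $c$. Only the choices at $T_A$ remain, and $T_A$ is independent.

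\textbf{Second step: reservoirs and the final choice.} Each $a\in T_A$ independently chooses a uniformly random reservoir $M_a\subseteq N^+_D(a)$ of size $m=2k$. Its final out-set $K_a\subseteq M_a$ will have size exactly $k$.

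Form an auxiliary digraph $G_a$ on $N^+_D(a)$ with an arc $b\to d$ whenever $d\in M_c$ for some retained out-neighbour $c$ of $b$. Every vertex of $G_a$ has out-degree at most $km$, and $G_a$ does not depend on $M_a$, since $c\neq a$.

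If $M_a$ spans at most $k$ arcs of $G_a$, delete one endpoint of each such arc and keep $k$ of the remaining vertices as $K_a$. Since $K_c\subseteq M_c$ for every $c$, no copy of $Q$ with source $a$ survives in the final digraph $D'$.

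\textbf{Probability estimate.} $G_a$ has at most $km\,n_a$ arcs, where $n_a=d^+_D(a)$. Each arc lies in $M_a$ with probability at most $(m/n_a)^2$. Therefore, conditional on all other reservoirs, the number of arcs of $G_a$ inside $M_a$ has mean at most $km^3/n_a\le 8k^4/k^{20}$. The probability that at least $k+1$ such arcs are present is then at most roughly $(8k^{4}\cdot m^2/k^{20})^{(k+1)/2}$, i.e.\ super-polynomially small in $k$, and this bound holds uniformly over the conditioning. The slack in $k^{20}$ and the hypothesis $k\ge100$ are used here.

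\textbf{Main obstacle.} The bad event $B_a$ (``$M_a$ spans more than $k$ arcs of $G_a$'') depends on $M_c$ for all $c$ two steps ahead of $a$. The number of these $c$ is about $k\,n_a$, which is unbounded. Conversely, a fixed $M_c$ influences events $B_{a'}$ for unboundedly many $a'$, because in-degrees are not controlled (unlike Section~\ref{eulerian}). This is why I do not expect the plain symmetric local lemma to apply.

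My first attempt would be to split $B_a$ into elementary events $E_{a,b,c,d}=\{b,d\in M_a,\ d\in M_c\}$ and apply Lemma~\ref{lem:lll-asymmetric} with $x_E$ proportional to $\Pr(E)$, as in Section~\ref{eulerian}. The difficulty is the load at $c$, namely the sum of $\Pr(E)$ over copies in which $c$ plays the middle role. This sum involves $\sum_{b\in N^-(c)}$, which is uncontrolled.

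To tame it, I would use the fact that the conditional bound above is uniform over all other reservoirs. This suggests a lopsided local lemma, or a resampling argument, in which each $B_a$ is treated as depending only on $M_a$ given the rest. If that fails, I would instead pre-thin the first-step choices outside $T_A$, so that every $c\in T_A$ receives at most $k^{O(1)}$ retained in-arcs from $A$. This can be attempted by choosing those $k$-sets randomly and discarding overloaded targets, using the large gap between $k^{20}$ and $k$ to absorb the losses.
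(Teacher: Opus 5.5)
\textbf{The missing step is avoiding all the events $B_a$ at once.} Your reduction is sound: the structure of a bad copy, the fixed choices outside $T_A$, and the decoupling $K_c\subseteq M_c$ reduce the lemma to finding reservoirs for which no $B_a$ occurs. But that is the whole difficulty, and you leave it open. Neither of your suggested remedies works.

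The uniform bound $\Pr(B_a\mid (M_c)_{c\neq a})\le\varepsilon$ does not let you treat $B_a$ as depending only on $M_a$. The reservoir $M_a$ enters $G_{a'}$ for every $a'$ with a path $a'\to b\to a$, so conditioning on $\bigcap_{a'}\overline{B_{a'}}$ distorts the law of $M_a$. Hence neither the lopsided local lemma hypothesis nor any product bound follows. Moser--Tardos resampling needs the same local-lemma condition on the variable-sharing graph, and that graph has unbounded degree here.

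Pre-thinning cannot cure this. If $|A|$ is much larger than $|T_A|$ and every vertex of $A$ sends its arcs into $T_A$, then whichever $k$ arcs are kept, some $c\in T_A$ receives at least $k|A|/|T_A|$ of them. Even bounded in-degree at $c$ would not suffice. $M_c$ still affects $B_a$ for every in-neighbour $a$ of each such $b$. A vertex $b\in A$ can also lie in unboundedly many reservoirs: the average of $\sum_{a\to b}2k/n_a$ over $b\in A$ is $2k|T_A|/|A|$.

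A smaller point: your tail bound for ``more than $k$ arcs'' is false, because arcs of $G_a$ inside $M_a$ are correlated through vertices of huge in-degree. For example, suppose half of $N_D^+(a)$ retains a common $c$ whose out-neighbourhood is the other half. Then $\Pr(B_a)$ is at least of order $k^2/n_a$, which is only polynomially small. Counting a matching of size $\lceil (k+1)/2\rceil$ in $G_a[M_a]$ instead repairs this; a union bound gives $(km^3/n_a)^{\lceil (k+1)/2\rceil}$.

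\textbf{The missing idea is to orient the dependencies by an ordering.} The paper first caps each vertex of $T_A$ at $d=k^{20}$ out-arcs. It then forms the auxiliary multidigraph $\mathcal H$ on $T_A$, with an arc $u\to v$ for each path $u\to w\to v$ such that $u,v$ have a common out-neighbour; such an arc is a copy of $Q$ with source $u$ and $c=v$. This multidigraph has out-degree at most $dk$, so it is $2dk$-degenerate. Hence $T_A$ can be ordered so that each $v_i$ receives at most $2dk$ arcs from earlier vertices.

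Only these backward arcs are controlled probabilistically. Each arc at $T_A$ is kept with probability $k^{-15}$. The bad events at $v_i$ are an out-degree outside $[k^5/2,3k^5/2]$, or more than $2k$ earlier in-neighbours. These depend only on $v_i$ and its earlier in-neighbours, and each vertex's choices affect at most $dk+1$ events, so the symmetric local lemma applies.

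The out-sets are then chosen greedily in this order. Vertex $v_i$ avoids the at most $2k^2$ vertices chosen by its earlier in-neighbours; this kills copies with an earlier source. It then takes an independent $k$-set in the conflict graph $R$ for copies with source $v_i$ and an earlier $c$. This graph has average degree at most $2k^2$, because only out-degrees enter. Thus the direction with unbounded in-degrees is handled deterministically by the order rather than by the local lemma, and this is what your plan lacks.
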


\begin{proof}
Put $d=k^{20}$. If $T_A=\varnothing$, simply retain $k$
outgoing arcs at each vertex. Henceforth assume $T_A\ne\varnothing$.
For a spanning subdigraph $J\subseteq D$, define an auxiliary
multidigraph $\mathcal H(J)$ on $T_A$ as follows. For every directed
path $u\to w\to v$ in $J$ with $u,v\in T_A$ and
$N_J^+(u)\cap N_J^+(v)\ne\varnothing$, insert an arc $u\to v$
labeled by $w$. Different choices of $w$ give parallel arcs.

The multidigraph $\mathcal H(J)$ is loopless because $D$ has no
digons. Moreover, $\mathcal H(J)$ has an arc if and only if $J$
contains a copy of $Q$ with source in $T_A$. Indeed, such an arc gives
$u\to w\to v\to x$ and $u\to x$ for a common out-neighbor $x$.
Here $u,v\in T_A$ and $w,x\in A$, and $w\ne x$ because otherwise
$v,w$ form a digon. Conversely, in a copy
$u\to w\to v\to x$, $u\to x$ with $u\in T_A$, both $w$ and $x$
lie in $A$, and the arc $v\to x$ implies $v\in T_A$. It therefore suffices to construct a spanning $k$-outregular
subdigraph $D'\subseteq D$ such that $\mathcal H(D')$
has no arcs.

First, we construct a spanning subdigraph $F\subseteq D$ and an
ordering $v_1,\ldots,v_n$ of $T_A$ such that
\begin{equation}\label{eq:appendix-sparse-predecessors} 
\begin{aligned} 
d_F^+(v)&\ge3k^4 &&(v\in T_A),\\ 
d_F^+(v)&=k &&(v\notin T_A),\\ 
\bigl|N_{\mathcal H(F)}^-(v_i)       
\cap\{v_1,\ldots,v_{i-1}\}\bigr|&\le2k &&(1\le i\le n). 
\end{aligned}
\end{equation}
The last bound counts distinct in-neighbors, and parallel arcs are counted only once.

Retain exactly $d$ outgoing arcs at each vertex of $T_A$ and exactly
$k$ at every other vertex, obtaining $F_0$. Since every out-neighbor
of a vertex of $T_A$ lies outside $T_A$, each vertex has out-degree at
most $dk$ in $\mathcal H(F_0)$, with multiplicity. Consequently, every
induced submultidigraph of $\mathcal H(F_0)$ has a vertex of total
degree at most $2dk$. Successively removing such a vertex and
reversing the removal order gives an ordering $v_1,\ldots,v_n$ in
which at most $2dk$ arcs enter $v_i$ from preceding vertices, counted
with multiplicity.

Independently retain every arc of $F_0$ with tail in $T_A$ with
probability $p=k^{-15}$, leaving all other arcs unchanged. Denote the
resulting graph by $F$. For each $i$, let $\mathcal A_i$ be the event
\[
 d_F^+(v_i)\notin[dp/2,3dp/2],
\]
and let $\mathcal B_i$ be the event that $v_i$ has more than $2k$
distinct preceding in-neighbors in $\mathcal H(F)$.
Since $dp/2=k^5/2\ge3k^4$, avoiding these events establishes
\eqref{eq:appendix-sparse-predecessors}.

By the first estimate in Lemma~\ref{lem:chernoff},
\begin{equation}\label{eq:appendix-degree-tail}
 \Pr(\mathcal A_i)\le2e^{-k^5/12}\le k^{-k}.
\end{equation}

To estimate the probability of $\mathcal B_i$, let $S_i=N_F^+(v_i)$ be the random retained out-neighborhood of $v_i$. Fix a possible value $S_i=S$ with $dp/2\le |S|\le3dp/2$, so that $\mathcal A_i$ does not occur.
This fixes all random choices at $v_i$.
For each $j<i$, let $m_{ji}$ be the number of arcs from $v_j$ to
$v_i$ in $\mathcal H(F_0)$, and let $X_j$ indicate that $v_j$ is an
in-neighbor of $v_i$ in $\mathcal H(F)$. Conditional on $S_i=S$ , \(\mathcal B_i\) is the event that $\sum_{j<i}X_j>2k$. The event $X_j=1$ requires both a retained arc from $v_j$ to $S$ and a retained arc
$v_j\to w$ for one of the $m_{ji}$ vertices $w$ labeling such an
auxiliary arc. Their respective probabilities are at most $|S|p$
and $m_{ji}p$. The two sets of candidate arcs are disjoint: a
labeling vertex $w$ satisfies $w\to v_i$, so $w\notin S$ because
$D$ has no digons. 
The two events depend on disjoint sets of independent arc choices at
$v_j$, so they are independent conditional on $S_i=S$. 
Hence 
\[
 \Pr(X_j=1\mid S_i=S)\le\frac32dp^3m_{ji}.
\]
After the choices at $v_i$ have been fixed, the variables $X_j$ are
independent, since each depends only on choices at its own vertex
$v_j$. The ordering therefore gives
\[
 \mathbb E\left[\sum_{j<i}X_j\,\middle|\,S_i=S\right]
 \le\frac32dp^3\sum_{j<i}m_{ji}
 \le3d^2kp^3=3k^{-4}\le1.
\]
The second estimate in Lemma~\ref{lem:chernoff} yields, uniformly over
the fixed choices,
\[
 \Pr(\mathcal B_i\mid S_i=S)
 \le\left(\frac{e}{2k}\right)^{2k}=k^{-k}\left(\frac{e^2}{4k}\right)^k \le k^{-k}.
\]
Together with \eqref{eq:appendix-degree-tail}, this implies
\begin{equation}\label{eq:appendix-bad-event}
\begin{aligned}
\Pr(\mathcal A_i\cup\mathcal B_i)
&=\Pr(\mathcal A_i)+\Pr(\mathcal B_i\cap\overline{\mathcal A_i})\\
  &=\Pr(\mathcal A_i)+\mathbb E\!\left[\mathbf1_{\overline{\mathcal A_i}}\Pr(\mathcal B_i\mid S_i)\right]\\
  &\le k^{-k}+k^{-k}\Pr(\overline{\mathcal A_i})\\
&\le2k^{-k}.
\end{aligned}
\end{equation}

Consider the dependency bound which can be read directly from the independent blocks of choices indexed by vertices of $T_A$.
The event $\mathcal A_i\cup\mathcal B_i$ depends only on the block
at $v_i$ and those at its preceding in-neighbors in
$\mathcal H(F_0)$, at most $2dk+1$ blocks altogether. Each block
occurs in at most $dk+1$ of these events, because
$\Delta^+(\mathcal H(F_0))\le dk$. Thus each event is independent
of all but at most
\[
 (2dk+1)(dk+1)-1\le5d^2k^2=5k^{42}
\]
others. Lemma~\ref{lem:lll-symmetric} applies, since
\[
 e\,(2k^{-k})(5k^{42}+1)
 \le12e\,k^{42-k}<1\quad(k\ge100).
\]
Therefore we can fix $F$ satisfying
\eqref{eq:appendix-sparse-predecessors}.

It remains to choose $k$ out-neighbors for each vertex of $T_A$.
Process $v_1,\ldots,v_n$ in order, leaving the outgoing arcs of
vertices outside $T_A$ unchanged. Suppose the choices at the preceding
vertices have been made. From $N_F^+(v_i)$, delete the chosen
out-neighborhoods of its preceding in-neighbors in $\mathcal H(F)$.
At most $2k^2$ candidate vertices are excluded, leaving a set $W$ of size at
least $3k^4-2k^2$. Choosing the out-neighbors of $v_i$ from $W$ ensures that no arc from a
preceding vertex to $v_i$ remains in the final auxiliary graph.

On $W$, form an undirected graph $R$ by joining distinct $w,x$ if
there is a directed path $w\to v_j\to x$ or $x\to v_j\to w$
through a preceding vertex $v_j$, using its already chosen outgoing
arcs. Each $w\in W\subseteq A$ has exactly $k$ outgoing arcs in
$F$, and each preceding $v_j$ retains exactly $k$. Hence at most
$k^2$ such paths start at $w$, and $R$ has average degree at most
$2k^2$. Every graph of average degree at most $\ell$ has an independent
set of size at least $|V|/(\ell+1)$: in a uniformly random vertex order,
the vertices preceding all their neighbors form an independent set
of expected size $\sum_v1/(d(v)+1)\ge |V|/(\ell+1)$.
Consequently, $R$ has an independent set of size at least
\[
 \frac{3k^4-2k^2}{2k^2+1}\ge k.
\]
Choose $k$ of its vertices as the out-neighbors of $v_i$.
An auxiliary arc $v_i\to v_j$ to a preceding vertex would give a
path $v_i\to w\to v_j$ and a common out-neighbor $x$ of $v_i,v_j$;
then $w,x$ would be adjacent in $R$, a contradiction. Thus there
are no auxiliary arcs in either direction between $v_i$ and any
preceding vertex. Subsequent deletions cannot create auxiliary arcs.
After all vertices have been processed, the resulting $D'$ has
out-degree exactly $k$ at every vertex and $\mathcal H(D')$ is
edgeless. This proves the lemma.
\end{proof}

\begin{proof}[Proof of Theorem~\ref{thm:c413-avoidable}]
Fix a positive integer $k$, and put $K=\max\{k,100\}$.
Let $D$ have minimum out-degree at least
\[
 d(k)=12K^{20^3}.
\]
By Lemma~\ref{lem:typed-reduction}, $D$ contains a spanning oriented subdigraph $D_0$ with a 1-typed tripartition $(A,B,C)$ and minimum
out-degree at least $K^{20^3}$. Apply
Lemma~\ref{lem:one-source-class} three times, with parameters
$K^{20^2}$, $K^{20}$, and $K$, and with the designated receiving
classes $A$, $B$, and $C$, respectively. This produces spanning
subdigraphs
\[
 D_0\supseteq D_1\supseteq D_2\supseteq D_3.
\]
The digraphs $D_1,D_2,D_3$ are $K^{20^2}$-outregular,
$K^{20}$-outregular, and $K$-outregular, respectively. The sets $T_A,T_B,T_C$ remain fixed,
since every vertex retains positive out-degree throughout.
The graph $D_1$ has no copy of $Q$ with source in $T_A$;
$D_2$ has none with source in $T_B$; and $D_3$ has none with source
in $T_C$. Deleting arcs preserves the earlier exclusions, and the
three source classes partition the vertex set. Hence $D_3$ is
$Q$-free and has minimum out-degree $K\ge k$.
\end{proof}
\end{document}